\documentclass[a4paper,12pt,eqno]{article}

\setlength{\evensidemargin}{-3mm}
\setlength{\oddsidemargin}{-3mm}
\setlength{\topmargin}{-10mm}
\setlength{\textheight}{230mm}
\setlength{\textwidth}{165mm}


\usepackage{amsthm,amsmath,amssymb}

\usepackage{graphicx}


\usepackage{color}



\theoremstyle{plain}
\newtheorem{theorem}{Theorem}[section]
\newtheorem{lemma}[theorem]{Lemma}
\newtheorem{corollary}[theorem]{Corollary}
\newtheorem{proposition}[theorem]{Proposition}

\theoremstyle{definition}

\theoremstyle{remark}
\newtheorem{remark}[theorem]{Remark}


\makeatletter
 \@addtoreset{equation}{section}
\makeatother

\newcommand{\RM}{\mathbb{R}}

\newcommand{\NM}{\mathbb{N}}
\newcommand{\CM}{\mathbb{C}}

\newcommand{\HM}{\mathbb{H}}
\newcommand{\Mat}{\operatorname{Mat}}
\newcommand{\Sdet}{\operatorname{Sdet}}
\newcommand{\RP}{\operatorname{Re}}
\newcommand{\laangle}{\langle\!\langle}
\newcommand{\raangle}{\rangle\!\rangle}



\title{\bf The quaternionic second weighted zeta function of a graph and 
the Study determinant}


\author{
{\small Norio Konno}\\
{\scriptsize Department of Applied Mathematics, 
Faculty of Engineering, 
Yokohama National University}\\
{\scriptsize Hodogaya, Yokohama 240-8501, Japan}\\
{\scriptsize e-mail: konno@ynu.ac.jp}\\
{\small Hideo Mitsuhashi}\\
{\scriptsize Faculty of Education, 
Utsunomiya University}\\
{\scriptsize Utsunomiya, Tochigi 321-8505, Japan}\\
{\scriptsize e-mail: mitsu@cc.utsunomiya-u.ac.jp}\\
{\small Iwao Sato}\\
{\scriptsize Oyama National College of Technology}\\
{\scriptsize Oyama, Tochigi 323-0806, Japan}\\
{\scriptsize e-mail: isato@oyama-ct.ac.jp}\\}

\date{\small Mathematics Subject Classifications: 05C50, 15A15, 11R52}

\begin{document}

\maketitle

\begin{abstract}
We establish a generalization of the second weighted zeta function of a graph 
to the case of quaternions. 
For an arc-weighted graph whose weights are quaternions, 
we define the second weighted zeta function by using the Study determinant 
that is a quaternionic determinant for quaternionic matrices defined by Study.   
This definition is regarded as a quaternionic analogue of the determinant expression of 
Hashimoto type for the Ihara zeta function of a graph. 
We derive the Study determinant expression of Bass type and the Euler product for 
the quaternionic second weighted zeta function. 

  \bigskip\noindent \textbf{Keywords:} Quaternionic matrix; Study determinant; Ihara zeta function 
\end{abstract}

\setcounter{equation}{0}
\section{Introduction}\label{SecIntro}
The Ihara zeta function of a graph has achieved success in spectral theory of graphs. 
Zeta functions of graphs started from Ihara zeta functions of regular 
graphs by Ihara \cite{Ihara1966}. 
Ihara defined the $p$-adic Selberg zeta function ${\bf Z}_{\Gamma}(u)$ of 
a torsion-free discrete cocompact subgroup $\Gamma$ of $\operatorname{PGL}_2$ over a locally 
compact field under a discrete valuation, 
and showed that its reciprocal is a explicit polynomial. 
Ihara's motivation to define ${\bf Z}_{\Gamma}(u)$ was to 
count the number of primitive conjugacy classes of torsion-free discrete 
cocompact subgroups. 
Regarding Ihara's work, Serre \cite{Serre} pointed out that the Ihara zeta function 
is the zeta function of the quotient $\Gamma{\backslash}T$ (a finite regular graph) 
of the one-dimensional Bruhat-Tits building $T$ (an infinite regular tree) 
associated with $\operatorname{SL}_2(\mathbb{Q}_p)$. 
This observation led to rapid developments of zeta functions of graphs. 
Sunada \cite{Sunada1986}, \cite{Sunada1988} developed zeta functions of regular graphs 
equipped with unitary representations of fundamental groups of the graphs. 
Hashimoto \cite{Hashimoto1989} explored multivariable zeta functions of bipartite graphs and 
also gave a determinant expression for the Ihara zeta function of a general graph 
by using its edge matrix. 
Bass \cite{Bass1992} generalized Ihara's result on zeta functions of 
regular graphs to irregular graphs, and showed that their reciprocals are 
again polynomials. Subsequently, various proofs of Bass' Theorem were given by 
Stark and Terras \cite{ST1996}, Foata and Zeilberger \cite{FZ1999}, Kotani and Sunada \cite{KS2000}. 
Zeta functions of edge-weighted graphs were proposed by Hashimoto \cite{Hashimoto1990} and 
those of arc-weighted graphs by Stark and Terras \cite{ST1996} which are called edge zeta functions.  
Stark and Terras \cite{ST1996} gave their determinant expressions by using their edge matrices. 
Mizuno and Sato \cite{MS2004} focused on a special version of edge zeta functions, 
and defined the weighted zeta function by incorporating  
a variable $t$ that measures the length of cycles into the edge zeta function. 
Subsequently, Sato \cite{Sato} also defined a new class of zeta functions of graphs 
by modifying the determinant expression of the weighted zeta function defined in \cite{MS2004}. 
This new zeta function, which was named the second weighted zeta function by Sato, 
played essential roles in the concise proof of the spectral mapping theorem 
for the Grover walk on a graph in \cite{KS2012} and of the Smilansky's formula 
\cite{Smilansky2007} for the characteristic polynomial of the bond scattering matrix of a graph 
in \cite{MS2008}. 
Thereby we expect that the second weighted zeta function brings about 
rich outcomes in quantum dynamics on graphs such as quantum walks on graphs or quantum graphs. 
In this paper, we aim to establish a quaternionic analogue of 
the second weighted zeta function of a graph and to derive its basic properties. 
Our results will play crucial roles in our future work on quaternionic quantum walks which 
was established by Konno \cite{Konno2015} recently.

The quaternion was discovered by Hamilton in 1843. 
It can be considered as an extension of the complex number. However, 
quaternions do not commute mutually in general and 
the definition of determinant is invalid for quaternionic matrices. 
For many years, a number of researchers, for example Cayley, 
Study, Moore, Dieudonn{\'e}, Dyson, Mehta, Xie, Chen, 
have given different definitions of determinants of quaternionic matrices. 
Detailed accounts on the determinants of  
quaternionic matrices can be found in, for example, \cite{Aslaksen1996,Zhang1997}. 
In this paper, we extend the second weighted zeta function of a graph 
to the case of quaternions by using the quaternionic determinant 
defined by Study \cite{Study1920}. 
An advantage of this approach is that one can reduce a calculation 
of the Study determinant to that of the ordinary determinant, so is 
easier to handle and apply than other general and abstract approaches. 
The Study determinant enables us to derive the explicit determinant expression 
and the Euler product for the quaternionic second weighted zeta function 
analogous to the ordinary determinant.

The rest of the paper is organized as follows. 
In Section 2, we provide a summary of the Ihara zeta function and its variants. 
We give various zeta functions of graphs, including the second weighted zeta function, 
and present their determinant expressions. 
In the end of this section, we explain briefly that 
the second weighted zeta function can be viewed as a natural generalization 
of the Ihara zeta function of a tree lattice and is related 
to quantum systems on graphs. 
In Section 3, we explain the Study determinant of a quaternionic matrix and give some 
properties of it which are needed in later sections. 
In Section 4, we define the quaternionic second weighted zeta function of a graph by using 
the Study determinant which is considered as a quaternionic analogue of the determinant expression of 
Hashimoto type, and determine its determinant expression of Bass type 
(Theorem \ref{DetExpressionOfBassType}). 
In Section 5, we derive the Euler product for the quaternionic second weighted zeta 
function of a graph. 

\section{The Ihara zeta function of a graph} 
In this section, we provide a summary of the Ihara zeta function of a graph and its development which 
led the Ihara zeta function to the second weighted zeta function. 
Let $G=(V(G)$, $E(G))$ be a finite connected graph with the set $V(G)$ of 
vertices and the set $E(G)$ of undirected edges $uv$ 
joining two vertices $u$ and $v$. 
We assume that $G$ has neither loops nor multiple edges throughout. 
For $uv \in E(G)$, an arc $(u,v)$ is the directed edge from $u$ to $v$. 
Let $D(G)=\{\,(u,v),\,(v,u)\,\mid\,uv{\;\in\;}E(G)\}$ and 
$|V(G)|=n,\;|E(G)|=m,\;|D(G)|=2m$. 
For $e=(u,v){\;\in\;}D(G)$, $o(e)=u$ denotes the {\it origin} and $t(e)=v$ the {\it terminal} 
of $e$ respectively. 
Furthermore, let $e^{-1}=(v,u)$ be the {\em inverse} of $e=(u,v)$. 
The {\em degree} $\deg v = \deg {}_G \  v$ of a vertex $v$ of $G$ is the number of edges 
incident to $v$. 
We denote by $D_G$ the symmetric digraph whose vertex set is $V(G)$ and directed edge set 
is $D(G)$. 
A {\em path $P$ of length $\ell$} in $G$ is a sequence 
$P=(e_1, \cdots ,e_{\ell})$ of $\ell$ arcs such that $e_r \in D(G)$ and 
$t(e_r)=o(e_{r+1})$ for $r{\;\in\;}\{1,\cdots,\ell-1\}$. 
We set $o(P)=o(e_1)$ and $t(P)=t(e_{\ell})$. $|P|$ denotes the length of $P$. 
We say that a path $P=(e_1, \cdots ,e_{\ell})$ has a {\em backtracking} 
if $ e_{r+1} =e_r^{-1} $ for some $r\ (1{\leq}r{\leq}\ell-1)$, and that 
$P=(e_1, \cdots ,e_{\ell})$ has a {\em tail} if $ e_{\ell} =e_1^{-1} $. 
A path $P$ is said to be a {\em cycle} if $t(P)=o(P)$. 
The {\em inverse} of a path 
$P=(e_1,\cdots,e_{\ell})$ is the path 
$(e_{\ell}^{-1},\cdots,e_1^{-1})$ and is denoted by $P^{-1}$.

Two cycles $C_1 =(e_1, \cdots ,e_{\ell})$ and 
$C_2 =(f_1, \cdots ,f_{\ell})$ are said to be {\em equivalent} if there exists 
$s$ such that $f_r =e_{r+s}$ for all $r$ where indices are treated modulo $\ell$. 
Let $[C]$ be the equivalence class which contains the cycle $C$. 
Let $B^r$ be the cycle obtained by going $r$ times around a cycle $B$. 
Such a cycle is called a {\em power} of $B$. 
A cycle $C$ is said to be {\em reduced} if 
both $C$ and $C^2$ have no backtracking. 
Furthermore, a cycle $C$ is said to be {\em prime} if it is not a power of 
a strictly smaller cycle.

The {\em Ihara zeta function} of a graph $G$ is 
a function of $t \in {\bf C}$ with $|t|$ sufficiently small, 
defined by 
\[
{\bf Z} (G, t)= {\bf Z}_G (t)= \prod_{[C]} (1- t^{ \mid C \mid } )^{-1} ,
\]
where $[C]$ runs over all equivalence classes of prime, reduced cycles of $G$. 
Ihara's original definition was group theoretic and defined for 
a torsion-free discrete cocompact subgroup $\Gamma$ of $\operatorname{PGL}_2$ 
over a locally compact field under a discrete valuation. 
In the case of regular graphs, 
equivalence classes of prime, reduced cycles of $G$ correspond to 
primitive conjugacy classes of $\Gamma$ and $|C|$ the degree of the 
corresponding primitive conjugacy class. 
For the details, see \cite{Ihara1966}. 
Determinant expressions of Ihara zeta functions for finite graphs 
are obtained in the following way.

Let ${\bf B}=( {\bf B}_{ef} )_{e,f \in D(G)} $ and 
${\bf J}_0=( {\bf J}_{ef} )_{e,f \in D(G)}$ be $2m \times 2m$ matrices 
defined as follows: 
\[
{\bf B}_{ef} =\left\{
\begin{array}{ll}
1 & \mbox{if $t(e)=o(f)$, } \\
0 & \mbox{otherwise,}
\end{array}
\right.
{\bf J}_{ef} =\left\{
\begin{array}{ll}
1 & \mbox{if $f= e^{-1} $, } \\
0 & \mbox{otherwise.}
\end{array}
\right.
\]
Then the matrix ${\bf B} - {\bf J}_0 $ is called the {\em edge matrix} of $G$.

\begin{theorem}[Hashimoto \cite{Hashimoto1989}; Bass \cite{Bass1992}]
Let $G$ be a connected graph. 
Then the reciprocal of the Ihara zeta function of $G$ is given by 
\begin{equation}\label{DetExpressionsOfIharaZeta}
{\bf Z} (G, t)^{-1} =\det ( {\bf I}_{2m} -t ( {\bf B} - {\bf J}_0 ))
=(1- t^2 )^{r-1} \det ( {\bf I}_n -t {\bf A}+ 
t^2 ({\bf D} -{\bf I}_n )), 
\end{equation}
where $r$ and ${\bf A}$ are the Betti number and the adjacency matrix 
of $G$ respectively, and ${\bf D} =({\bf D}_{uv})_{u,v{\in}V(G)}$ is the diagonal matrix 
with ${\bf D}_{uu} = \deg u $ for all $u{\;\in\;}V(G)$. 
\end{theorem}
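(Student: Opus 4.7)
The plan is to establish the two equalities separately: the first (Hashimoto's expression) by logarithmic differentiation combined with a combinatorial identification of closed walks, and the second (Bass's conversion) by a block matrix manipulation involving the source and terminus incidence matrices.

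For the first equality, I would take logarithms and use the identity
\[
-\log\det({\bf I}_{2m}-t({\bf B}-{\bf J}_0))=\sum_{k\geq 1}\frac{t^k}{k}\operatorname{tr}(({\bf B}-{\bf J}_0)^k).
\]
The key combinatorial point is that $({\bf B}-{\bf J}_0)_{ef}=1$ precisely when $t(e)=o(f)$ and $f\neq e^{-1}$, so $({\bf B}-{\bf J}_0)^k_{ee}$ counts closed walks of length $k$ starting with $e$ that contain no backtracking; taking the trace and ensuring the closing step also avoids backtracking gives exactly the number of reduced closed cycles of length $k$ with a marked starting arc. Grouping these cycles into equivalence classes under cyclic shift and writing each as a power $B^r$ of a unique prime reduced cycle $B$ produces the identity $\operatorname{tr}(({\bf B}-{\bf J}_0)^k)=\sum_{[B]\text{ prime reduced},\,r|B|=k}|B|$, which matches $-\log {\bf Z}(G,t)=\sum_{[B]}\sum_{r\geq 1}\frac{t^{r|B|}}{r}$ after exchanging the order of summation. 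Bookkeeping of multiplicities (length of $[B]$ versus number of cyclic representatives) will be the most delicate point here.

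For the second equality, I would introduce the $n\times 2m$ origin and terminus matrices ${\bf S}$, ${\bf T}$ defined by ${\bf S}_{ve}=\delta_{v,o(e)}$ and ${\bf T}_{ve}=\delta_{v,t(e)}$, and verify the identities ${\bf S}{\bf T}^\top={\bf A}$, ${\bf T}{\bf T}^\top={\bf S}{\bf S}^\top={\bf D}$, ${\bf T}^\top{\bf S}={\bf B}$, and ${\bf S}^\top{\bf S}-{\bf J}_0={\bf T}^\top{\bf T}-{\bf J}_0$ encode the fact that $e$ and $e^{-1}$ share the same underlying edge. Then I would consider the $(n+2m)\times(n+2m)$ block matrix
\[
M(t)=\begin{pmatrix} {\bf I}_n & -t{\bf S}+t\,{\bf T}({\bf J}_0) \\ {\bf T}^\top & {\bf I}_{2m}+t{\bf J}_0 \end{pmatrix}
\]
(or the equivalent standard choice), and compute $\det M(t)$ in two ways via the Schur complement. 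Eliminating the bottom-right block first produces $\det({\bf I}_{2m}-t({\bf B}-{\bf J}_0))$ (up to a factor involving $\det({\bf I}_{2m}+t{\bf J}_0)=(1-t^2)^m$), while eliminating the top-left block produces $\det({\bf I}_n-t{\bf A}+t^2({\bf D}-{\bf I}_n))$. Since the Betti number satisfies $r-1=m-n$, comparing the two evaluations yields the stated factor $(1-t^2)^{r-1}$.

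The main obstacle is finding the correct auxiliary block matrix: the choice must cause the off-diagonal cross terms to telescope into exactly $\mathbf{B}-\mathbf{J}_0$ on one side and into $\mathbf{A}-t\mathbf{D}$-type expressions on the other, which requires exploiting the slight asymmetry between $\mathbf{S}$ and $\mathbf{T}$ together with the involution ${\bf J}_0$. Once the correct matrix is pinned down, both determinant evaluations reduce to routine Schur complement calculations and the identity $({\bf I}_{2m}+t{\bf J}_0)^{-1}=\frac{1}{1-t^2}({\bf I}_{2m}-t{\bf J}_0)$.
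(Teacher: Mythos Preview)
The paper does not actually prove this theorem: it is stated with attribution to Hashimoto and Bass and used only as background. The nearest proofs in the paper are for the quaternionic analogues---the Bass-type formula in Theorem~4.1 and the Euler product in Section~5---so those are the natural points of comparison.

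For the Bass equality your plan is correct in outline and close to what the paper does for Theorem~4.1, but the paper's argument avoids exactly the obstacle you flag. Rather than searching for the right $(n+2m)\times(n+2m)$ block matrix, the authors factor $\mathbf{B}_w=\mathbf{L}\,{}^{T}\!\mathbf{K}$ through two $2m\times n$ incidence-type matrices (your $\mathbf{T}^\top$ and essentially your $\mathbf{S}^\top$), pull out the factor $\det(\mathbf{I}_{2m}+t\mathbf{J}_0)=(1-t^2)^m$ by hand, and then apply the Sylvester identity $\det(\mathbf{I}_{2m}-\mathbf{A}\mathbf{B})=\det(\mathbf{I}_n-\mathbf{B}\mathbf{A})$ directly to pass from size $2m$ to size $n$. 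This is the Schur-complement trick with the auxiliary block matrix already chosen for you, so it is cleaner than hunting for $M(t)$; specializing $w\equiv 1$ recovers the classical statement immediately.

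For the Hashimoto equality your log--trace expansion and prime-cycle bookkeeping is a standard and valid route. The paper's Section~5 takes a different path: it uses Amitsur's identity via Lyndon-word factorization in a free monoid, which yields the Euler product without ever taking logarithms and extends to noncommutative weights. In the commutative unweighted case the two arguments are equivalent, but the Lyndon-word method is what generalizes to the quaternionic setting the paper cares about.
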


We call the middle formula the determinant expression 
of {\it Hashimoto type} and the right hand side 
the determinant expression of {\it Bass type} in (\ref{DetExpressionsOfIharaZeta}). 

Now we shall give the definition of the second weighted zeta function. 
Consider an $n \times n$ complex matrix 
${\bf W} =({\bf W}_{uv})_{u,v{\in}V(G)}$ with $(u,v)$-entry 
equals $0$ if $(u,v){\;\notin\;}D(G)$. 
We call ${\bf W}$ a {\em weighted matrix} of $G$.
Furthermore, let $w(u,v)= {\bf W}_{uv}$ for $u,v \in V(G)$ and 
$w(e)= w(u,v)$ if $e=(u,v) \in D(G)$. 
For a path $P=( e_1 , \cdots , e_{\ell} )$ of $G$, the {\em norm} $w(P)$ of $P$ 
is defined by $w(P)=w(e_1)w(e_2){\cdots}w(e_{\ell})$. 
For a weighted matrix ${\bf W}$ of $G$, let 
${\bf B}_w=( {\bf B}^{(w)}_{ef} )_{e,f \in D(G)}$ be 
the $2m \times 2m$ complex matrix as follows: 
\begin{equation*}
{\bf B}^{(w)}_{ef} =\left\{
\begin{array}{ll}
w(f) & \mbox{if $t(e)=o(f)$, } \\
0 & \mbox{otherwise.}
\end{array}
\right.
\end{equation*}
Then the {\em second weighted zeta function} of $G$ is defined by 
\[
{\bf Z}_1 (G,w,t)= \det ( {\bf I}_{2m} -t ( {\bf B}_w - {\bf J}_0 ) )^{-1} . 
\]
We call ${\bf B}_w - {\bf J}_0$ the {\it ${\bf B}$-weighted edge matrix} of $G$. 
If $w(e)=1$ for any $e \in D(G)$, then the second weighted zeta function of $G$ 
coincides with the Ihara zeta function of $G$.

\begin{theorem}[Sato \cite{Sato}]\label{SatoThm}
Let $G$ be a connected graph, and 
let ${\bf W}$ be a weighted matrix of $G$. 
Then the reciprocal of the second weighted zeta function of $G$ is given by 
\[
{\bf Z}_1 (G,w,t )^{-1} =(1- t^2 )^{m-n} 
\det ({\bf I}_n -t {\bf W}+ t^2 ( {\bf D}_w - {\bf I}_n )) , 
\]
where $n=|V(G)|$, $m=|E(G)|$ and 
${\bf D}_w =({\bf D}^{(w)}_{uv})_{u,v{\in}V(G)}$ is the diagonal matrix 
with ${\bf D}^{(w)}_{uu} = \displaystyle \sum_{e:o(e)=u}w(e)$ for all 
$u{\;\in\;}V(G)$. 
\end{theorem}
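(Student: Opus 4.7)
The plan is the classical Bass-style block matrix argument, adapted to the weighted setting. First I would introduce the $n \times 2m$ incidence matrices $\mathbf{K}$, $\mathbf{L}$, $\mathbf{K}_w$ with $\mathbf{K}_{v,e}=1$ iff $o(e)=v$, $\mathbf{L}_{v,e}=1$ iff $t(e)=v$, and $(\mathbf{K}_w)_{v,e}=w(e)$ iff $o(e)=v$ (zero otherwise). A direct check on entries yields the bridge identities
$$\mathbf{B}_w = \mathbf{L}^T \mathbf{K}_w, \quad \mathbf{K}_w \mathbf{L}^T = \mathbf{W}, \quad \mathbf{K}_w \mathbf{K}^T = \mathbf{D}_w, \quad \mathbf{J}_0 \mathbf{L}^T = \mathbf{K}^T,$$
which are the exact link between the $2m$-dimensional and $n$-dimensional sides of the formula.

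Next I would record two basic facts about the arc-reversal matrix $\mathbf{J}_0$. Since $\mathbf{J}_0$ is the permutation matrix of a fixed-point-free involution on $D(G)$ consisting of the $m$ transpositions $\{e, e^{-1}\}$, its spectrum is $\{+1,-1\}$ with multiplicity $m$ each; hence $\det(\mathbf{I}_{2m} + t\mathbf{J}_0) = (1-t^2)^m$. Moreover, $\mathbf{J}_0^2 = \mathbf{I}_{2m}$ gives $(\mathbf{I}_{2m} + t\mathbf{J}_0)^{-1} = (1-t^2)^{-1}(\mathbf{I}_{2m} - t\mathbf{J}_0)$ whenever $t^2 \neq 1$.

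The heart of the argument is to evaluate, via the Schur complement formula in two different ways, the determinant of the $(2m+n) \times (2m+n)$ block matrix
$$\mathcal{M} = \begin{pmatrix} \mathbf{I}_{2m} + t\mathbf{J}_0 & t\mathbf{L}^T \\ \mathbf{K}_w & \mathbf{I}_n \end{pmatrix}.$$
Eliminating the $(2,2)$-block produces
$$\det\mathcal{M} = \det\bigl(\mathbf{I}_{2m} + t\mathbf{J}_0 - t\mathbf{L}^T\mathbf{K}_w\bigr) = \det\bigl(\mathbf{I}_{2m} - t(\mathbf{B}_w - \mathbf{J}_0)\bigr) = \mathbf{Z}_1(G,w,t)^{-1}.$$
Eliminating the $(1,1)$-block and using $\mathbf{K}_w (\mathbf{I}_{2m} - t\mathbf{J}_0)\mathbf{L}^T = \mathbf{K}_w\mathbf{L}^T - t\mathbf{K}_w\mathbf{K}^T = \mathbf{W} - t\mathbf{D}_w$ produces
$$\det\mathcal{M} = (1-t^2)^m \det\Bigl(\mathbf{I}_n - \tfrac{t}{1-t^2}(\mathbf{W} - t\mathbf{D}_w)\Bigr) = (1-t^2)^{m-n}\det\bigl(\mathbf{I}_n - t\mathbf{W} + t^2(\mathbf{D}_w - \mathbf{I}_n)\bigr).$$
Equating the two evaluations of $\det\mathcal{M}$ gives the claimed formula.

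The only subtlety is that the second Schur reduction requires $t^2 \neq 1$; since both sides of the claimed identity are polynomials in $t$, agreement on the Zariski-open set $\{t \in \mathbb{C} : t^2 \neq 1\}$ extends automatically to all $t$. I do not expect a substantive obstacle beyond locating the correct block matrix $\mathcal{M}$: once it is written down, the four bridge identities and the spectral description of $\mathbf{J}_0$ make everything slot into place through routine matrix algebra.
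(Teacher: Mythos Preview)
Your argument is correct. The paper does not itself prove Theorem~\ref{SatoThm} (it is quoted from \cite{Sato}); the closest in-paper proof is that of the quaternionic analogue, Theorem~\ref{DetExpressionOfBassType}. That proof uses the same incidence matrices and the same factorization $\mathbf{B}_w=\mathbf{L}\,{}^{T}\!\mathbf{K}$, but instead of your block matrix $\mathcal{M}$ it first splits off the factor $\mathbf{I}_{2m}+t\mathbf{J}_0$ and then applies the Sylvester-type identity $\det(\mathbf{I}_{2m}-\mathbf{A}\mathbf{B})=\det(\mathbf{I}_n-\mathbf{B}\mathbf{A})$ (Proposition~\ref{SdetProperties}(vii)) to pass from size $2m$ to size $n$. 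Since the Sylvester identity is exactly what falls out of computing your $\det\mathcal{M}$ by the two Schur complements, the two presentations are equivalent; your block-matrix packaging is arguably cleaner in the commutative (complex) setting, while the paper's factor-then-swap packaging is what generalizes smoothly to the Study determinant in the quaternionic case.
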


In \cite{MS2008}, Mizuno and Sato obtained the Euler product for ${\bf Z}_1 (G,w,t )$. 
Let $\tilde{w}(e, f)$ be the $(e,f)$-entry of the matrix ${\bf B}_w-{\bf J}_0$.
$\tilde{w}(e, f)$ is given by the following formula: 
\begin{equation}\label{EntriesOf2ndEdgeMatrix}
\tilde{w}(e, f)=\begin{cases}
w(f) & \text{if $t(e)=o(f)$ and $f{\;\neq\;}e^{-1}$},\\
w(f)-1 & \text{if $f=e^{-1}$},\\
0 & \text{otherwise}.
\end{cases}
\end{equation}
We set $\tilde{w}(P)=\tilde{w}(e_1, e_2)\tilde{w}(e_2, e_3){\cdots}\tilde{w}(e_{\ell-1}, e_{\ell})$ 
for a path $P=( e_1 , \cdots , e_{\ell} )$. 

\begin{theorem}[Mizuno and Sato \cite{MS2008}]\label{MizunoSatoThm}
Let G be a connected graph. Then
\[
{\bf Z}_1 (G,w,t )=\prod_{[C]}(1-\tilde{w}(C)t^{|C|})^{-1}, 
\]
where $[C]$ runs over all equivalence classes of prime cycles of $G$. 
\end{theorem}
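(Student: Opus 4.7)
The plan is to take the formal logarithm of the Hashimoto-type expression ${\bf Z}_1(G,w,t)^{-1} = \det({\bf I}_{2m} - t M)$ with $M={\bf B}_w-{\bf J}_0$, apply the standard identity $\log \det({\bf I}_{2m}-tM)^{-1} = \sum_{n \geq 1} \frac{t^n}{n}\operatorname{tr}(M^n)$, interpret each $\operatorname{tr}(M^n)$ combinatorially as a weighted count of length-$n$ cycles, regroup by equivalence classes of prime cycles and their powers, and finally recognize the resulting double series as $\log\prod_{[C]}(1-\tilde{w}(C)t^{|C|})^{-1}$.

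The combinatorial core is the identification of $\operatorname{tr}(M^n)$. Since $M_{ef}=\tilde{w}(e,f)$ vanishes unless $t(e)=o(f)$ by (\ref{EntriesOf2ndEdgeMatrix}), one has $(M^n)_{ee}=\sum \tilde{w}(e_0,e_1)\tilde{w}(e_1,e_2)\cdots\tilde{w}(e_{n-1},e_n)$, the sum running over sequences with $e_0=e_n=e$ and $t(e_i)=o(e_{i+1})$, so that summing over $e\in D(G)$ yields $\operatorname{tr}(M^n)=\sum_{C:|C|=n}\tilde{w}(C)$. Here the sum is over all rooted cycles $C=(e_1,\ldots,e_n)$ of length $n$, and one extends $\tilde{w}$ cyclically via $\tilde{w}(C):=\tilde{w}(e_1,e_2)\cdots\tilde{w}(e_{n-1},e_n)\tilde{w}(e_n,e_1)$, including the wrap-around factor.

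Next I would invoke the standard fact that every cycle is a power of a unique prime cycle: each equivalence class $[C]$ of length $n$ has the form $[B^r]$ for a unique prime class $[B]$ and a unique integer $r\geq 1$ with $r|B|=n$. Multiplicativity of $\tilde{w}$ under concatenation gives $\tilde{w}(B^r)=\tilde{w}(B)^r$, and since $B$ is prime its cyclic rotation group is trivial, so the class $[B^r]$ consists of exactly $|B|$ distinct rooted representatives, all carrying the same value of $\tilde{w}$. Collecting, $\operatorname{tr}(M^n)=\sum_{[B]\text{ prime},\,r|B|=n}|B|\,\tilde{w}(B)^r$. Substituting into the log-series, swapping the order of summation, and using $-\log(1-x)=\sum_{r\geq 1}x^r/r$, one gets $\sum_{n\geq 1}\frac{t^n}{n}\operatorname{tr}(M^n)=\sum_{[B]\text{ prime}}\sum_{r\geq 1}\frac{(\tilde{w}(B)t^{|B|})^r}{r}=-\sum_{[B]\text{ prime}}\log(1-\tilde{w}(B)t^{|B|})$, and exponentiating finishes the argument.

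The main technical point will be the careful accounting of cycles: justifying that the nonzero terms in $(M^n)_{ee}$ biject with rooted cycles of length $n$ through $e$ (noting that the backtracking case with $\tilde{w}(e,f)=w(f)-1$ still encodes a legitimate arc transition $t(e)=o(f)$, which is precisely why all prime cycles appear in the product rather than only the reduced ones), together with the count $|[B^r]|=|B|$, which follows because the stabilizer of a prime cycle under cyclic shifts is trivial. Both items are standard for Ihara-type zeta functions; once they are cleared, the remainder reduces to routine manipulation of formal power series.
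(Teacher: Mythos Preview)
Your argument is correct: the trace--log expansion of $\det({\bf I}_{2m}-tM)^{-1}$, the interpretation of $\operatorname{tr}(M^n)$ as a sum over rooted length-$n$ cycles, and the regrouping by prime powers with the count $|[B^r]|=|B|$ are all sound in the complex setting, and this is indeed the classical route to Euler products of Ihara type.

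However, the paper does not supply its own proof of this particular theorem; it is quoted from \cite{MS2008} as background. The closest the paper comes is its derivation of the quaternionic Euler product in Section~5 (Theorem~\ref{ThmEulerProduct}), and that proof follows a genuinely different path: rather than $\log\det=\operatorname{tr}\log$, it uses the Amitsur--Reutenauer--Sch\"{u}tzenberger identity (\ref{AmitsurIdentity}) in the ring of noncommutative formal power series, factoring ${\bf I}_{2m}-{\bf A}$ as an ordered product over Lyndon words, and then applies $\Sdet$ term by term. Your approach relies in an essential way on commutativity---both for the identity $\log\det=\operatorname{tr}\log$ and for the claim that $\tilde{w}(C)$ is constant on a cyclic equivalence class---so it proves Theorem~\ref{MizunoSatoThm} cleanly but would not survive the passage to quaternionic weights. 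The paper's Lyndon-word method, by contrast, is chosen precisely because it never requires reordering the factors $\tilde{w}(e_i,e_{i+1})$; it selects one canonical (lexicographically minimal) representative per prime cycle class and keeps the product in that fixed order, which is exactly what is needed when the weights lie in $\HM$.
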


We shall give some notable comments on the second weighted zeta function of a graph. 
Let $T$ be a locally finite (possibly infinite) tree 
and $\Gamma$ a group with an action on the symmetric digraph 
$X=D_T=(V(X),\overrightarrow{E}(X))$ where $V(X)=V(T)$ and 
$\overrightarrow{E}(X)=D(T)$. 
According to \cite{Bass1992}, we assume that $\Gamma$ satisfies the following conditions:  
\begin{description}
\item[(\hspace{0.9mm}I\hspace{0.9mm})]
$\Gamma$ acts without inversions, that is, 
${\Gamma}e{\;\neq\;}{\Gamma}e^{-1}$ for all $e{\;\in\;}\overrightarrow{E}(X)$. 
\item[(D)]
$\Gamma$ is discrete, that is, the stabilizer $\Gamma_u$ of $u$ 
is finite for every $u{\;\in\;}V(X)$. 
\item[(\hspace{0.4mm}F\hspace{0.4mm})]
$\Gamma$ is uniform (= cocompact), that is, $\Gamma{\backslash}X$ is finite. 
\end{description} 
$\Gamma$ is called a {\it uniform tree lattice}. 
Let $Y=(V(Y),\overrightarrow{E}(Y))=\Gamma{\backslash}X$ be the quotient digraph and 
$p:X{\longrightarrow}Y$ the projection. 
For $x{\;\in\;}X$ and $y=p(x){\;\in\;}Y$, the {\it index} $i(e)$ of 
$e{\;\in\;}\overrightarrow{E}(Y)$ which satisfies $o(e)=y$ 
is defined by  
\[
i(e)=|\{\,e'{\;\in\;}\overrightarrow{E}(X)\;|\;o(e')=x,\,p(e')=e\,\}|.
\]
The pair $(Y,i)$ is called an {\it edge-indexed graph}. 
Now we assume $Y$ has no loop and 
set $D_G=Y$ and $w(e)=i(e)$ for all $e{\;\in\;}\overrightarrow{E}(Y)$. 
Then ${\bf Z}_1 (G,w,t)$ coincides with the zeta function of the edge-indexed 
graph $(Y,i)$ which was defined by Bass \cite{Bass1992}. 
Therefore we can consider ${\bf Z}_1 (G,w,t)$ as a natural generalization 
of the zeta function of the edge-indexed graph. 

The second weighted zeta function has remarkable connections with quantum dynamics 
on graphs. 
Replacing $t$ with $1/{\lambda}$ in Theorem \ref{SatoThm} we have
\begin{equation}\label{EqnHasimotoBass}
\det ( \lambda{\bf I}_{2m} -( {\bf B}_w - {\bf J}_0 ) )=
(\lambda^2- 1 )^{m-n} 
\det (\lambda^2{\bf I}_n -\lambda{\bf W}+ ( {\bf D}_w - {\bf I}_n )).
\end{equation}
Using this equation, several quantum systems on graphs have been 
investigated. Consequently, new proofs of spectral properties of them were given in 
\cite{MS2008,KS2012} and spectra were newly determined by easy derivable parameters 
from ${\bf W}$ in \cite{HKSS2013,KMS2016}. 
Interestingly, time evolution operators of important quantum systems on graphs, 
for example, the Grover walk, the Szegedy walk, the bond scattering matrix of Smilansky, 
can be expressed by ${\bf B}$-weighted edge matrices. It enables us to apply (\ref{EqnHasimotoBass}) 
to derive spectra of time evolution operators. 
In this way, the generalization of Ihara zeta functions of tree lattices 
has significant applications in quantum mechanics. 
We expect that the second weighted zeta function brings about 
further outcomes in quantum dynamics on graphs such as quantum walks or quantum graphs.

\section{The Study determinant of a quaternionic matrix}
We shall establish the quaternionic second weighted zeta function of a graph 
from now on.  
In this section, we explain quaternions and quaternionic matrices that are 
needed in later sections. 
Let $\HM$ be the set of quaternions. $\HM$ is a noncommutative associative 
algebra over $\RM$, whose underlying real vector space has dimension $4$ 
with a basis $1,i,j,k$ which satisfy the following relations: 
\[
i^2=j^2=k^2=-1,\quad ij=-ji=k,\quad jk=-kj=i,\quad ki=-ik=j.
\]
For $x=x_0+x_1i+x_2j+x_3k{\;\in\;}\HM$, 
$x^*$ denotes the conjugate of $x$ in $\HM$ 
which is defined by $x^*=x_0-x_1i-x_2j-x_3k$, 
and $\RP x=x_0$ the real part of $x$. 
One can easily check $xx^*=x^*x$, $(x^*)^n=(x^n)^*$, and 
$x^{-1}=x^*/|x|^2$ for $x{\;\neq\;}0$. 
Hence, $\HM$ constitutes a skew field. 
We call $|x|=\sqrt{xx^*}=\sqrt{x^*x}=\sqrt{x_0^2+x_1^2+x_2^2+x_3^2}$ the norm of $x$. 
Indeed, $|{\,\cdot\,}|$ satisfies \\
\quad (1) $|x|{\;\geq\;}0$, and moreover $|x|=0 \Leftrightarrow x=0$,\\
\quad (2) $|xy|=|x||y|$,\\
\quad (3) $|x+y|{\;\leq\;}|x|+|y|$.\\
We notice that $\HM$ is a complete space with respect to the metric derived from 
the norm of quaternions. 

Any quaternion $x$ can be presented by two complex numbers 
$x=a+jb$ uniquely. 
Explicitly, if $x=x_0+x_1i+x_2j+x_3k$ then $a=x_0+x_1i$ and $b=x_2-x_3i$. 
Such a presentation is called the {\it symplectic decomposition}. 
Two complex numbers $a$ and $b$ are called the {\it simplex part} and the 
{\it perplex part} of $x$ respectively. 
We mean by a {\it quaternionic matrix} a matrix whose entries are quaternions. 
The symplectic decomposition is also valid for a quaternionic matrix. 
$\Mat(m{\times}n,\HM)$ denotes the set of $m{\times}n$ quaternionic matrices and 
$\Mat(m,\HM)$ the set of $m{\times}m$ quaternionic matrices. 
For ${\bf M}{\;\in\;}\Mat(m{\times}n,\HM)$, we can write ${\bf M}={\bf M}^S+j{\bf M}^P$ 
uniquely where ${\bf M}^S,{\bf M}^P{\;\in\;}\Mat(m,n,\CM)$. 
${\bf M}^S$ and ${\bf M}^P$ are called the {\it simplex part} and the 
{\it perplex part} of ${\bf M}$ respectively. 
We define $\psi$ to be the map from $\Mat(m{\times}n,\mathbb{H})$ to 
$\Mat(2m{\times}2n,\mathbb{C})$ as follows: 
\[
\psi : \Mat(m{\times}n,\mathbb{H}){\;\longrightarrow\;}\Mat(2m{\times}2n,\mathbb{C})
\quad{\bf M}{\;\mapsto\;}\begin{bmatrix}{\bf M}^S&-\overline{{\bf M}^P}\\{\bf M}^P&
\overline{{\bf M}^S}\end{bmatrix},
\]
where $\overline{\bf A}$ is the complex conjugate of a complex matrix ${\bf A}$. 
Then $\psi$ is an $\RM$-linear map. We also have 

\begin{lemma}\label{PsiLem}
Let ${\bf M}{\;\in\;}\Mat(m{\times}n,\HM)$ and ${\bf N}{\;\in\;}
\Mat(n{\times}m,\HM)$. Then 
\[ \psi({\bf M}{\bf N})=\psi({\bf M})\psi({\bf N}). \]
\end{lemma}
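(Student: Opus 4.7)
The plan is to prove this by direct computation, reducing everything to the symplectic decomposition and the commutation rule between $j$ and complex numbers. The key identity is that for any $a \in \CM$, one has $aj = j\bar a$, and hence for any complex matrix $\mathbf{A}$ (of compatible size) the relation $\mathbf{A}j = j\overline{\mathbf{A}}$ holds, where $\overline{\mathbf{A}}$ denotes the entrywise complex conjugate.

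First I would write the symplectic decompositions $\mathbf{M}=\mathbf{M}^S+j\mathbf{M}^P$ and $\mathbf{N}=\mathbf{N}^S+j\mathbf{N}^P$ and expand the product:
\[
\mathbf{M}\mathbf{N}=\mathbf{M}^S\mathbf{N}^S+\mathbf{M}^S\,j\mathbf{N}^P+j\mathbf{M}^P\mathbf{N}^S+j\mathbf{M}^P\,j\mathbf{N}^P.
\]
Pushing every $j$ to the left via $\mathbf{A}j=j\overline{\mathbf{A}}$ and using $j^2=-1$, this rearranges to
\[
\mathbf{M}\mathbf{N}=\bigl(\mathbf{M}^S\mathbf{N}^S-\overline{\mathbf{M}^P}\,\mathbf{N}^P\bigr)+j\bigl(\overline{\mathbf{M}^S}\,\mathbf{N}^P+\mathbf{M}^P\mathbf{N}^S\bigr),
\]
which identifies $(\mathbf{M}\mathbf{N})^S=\mathbf{M}^S\mathbf{N}^S-\overline{\mathbf{M}^P}\,\mathbf{N}^P$ and $(\mathbf{M}\mathbf{N})^P=\overline{\mathbf{M}^S}\,\mathbf{N}^P+\mathbf{M}^P\mathbf{N}^S$.

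Next I would substitute these into the definition of $\psi$, using that complex conjugation is entrywise and distributes over sums and products, to get an explicit $2m \times 2m$ block matrix for $\psi(\mathbf{M}\mathbf{N})$. Separately, I would compute $\psi(\mathbf{M})\psi(\mathbf{N})$ by carrying out the $2\times 2$ block multiplication
\[
\begin{bmatrix}\mathbf{M}^S & -\overline{\mathbf{M}^P}\\ \mathbf{M}^P & \overline{\mathbf{M}^S}\end{bmatrix}\begin{bmatrix}\mathbf{N}^S & -\overline{\mathbf{N}^P}\\ \mathbf{N}^P & \overline{\mathbf{N}^S}\end{bmatrix}.
\]
Comparing the four blocks against those of $\psi(\mathbf{M}\mathbf{N})$ yields the equality block by block, completing the proof.

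There is no real obstacle here; the only place where care is required is the bookkeeping of complex conjugates when moving $j$ past complex matrices, in particular the term $j\mathbf{M}^P\,j\mathbf{N}^P=-\overline{\mathbf{M}^P}\,\mathbf{N}^P$, and matching the $(2,2)$-blocks, where $\overline{\mathbf{M}^S\mathbf{N}^S-\overline{\mathbf{M}^P}\,\mathbf{N}^P}=\overline{\mathbf{M}^S}\,\overline{\mathbf{N}^S}-\mathbf{M}^P\overline{\mathbf{N}^P}$ must coincide with the corresponding block of $\psi(\mathbf{M})\psi(\mathbf{N})$.
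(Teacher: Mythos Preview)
Your proof is correct and is essentially the same argument as the paper's: both compute the symplectic decomposition of $\mathbf{M}\mathbf{N}$ by pushing $j$ to the left via $\mathbf{A}j=j\overline{\mathbf{A}}$, and then compare the resulting $2\times 2$ block matrix for $\psi(\mathbf{M}\mathbf{N})$ with the block product $\psi(\mathbf{M})\psi(\mathbf{N})$. The only difference is notational (you keep the $\mathbf{M}^S,\mathbf{M}^P$ notation whereas the paper renames them $\mathbf{A},\mathbf{B},\mathbf{C},\mathbf{D}$).
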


\begin{proof} 
Let ${\bf M}={\bf A}+j{\bf B}$ and ${\bf N}={\bf C}+j{\bf D}$ 
be symplectic decompositions of ${\bf M}$ and ${\bf N}$. Then 
\[ {\bf M}{\bf N}=({\bf A}+j{\bf B})({\bf C}+j{\bf D})
={\bf A}{\bf C}+{\bf A}j{\bf D}+j{\bf B}{\bf C}+j{\bf B}j{\bf D}. \]
Since  
${\bf X}j=j\overline{\bf X}$ for every complex matrix ${\bf X}$, 
we obtain 
\[ {\bf M}{\bf N}={\bf A}{\bf C}-\overline{{\bf B}}{\bf D}
+j(\overline{{\bf A}}{\bf D}+{\bf B}{\bf C}), \]
and therefore
\[ \psi({\bf M}{\bf N})
=\begin{bmatrix}{\bf A}{\bf C}-\overline{{\bf B}}{\bf D}
&-{\bf A}\overline{{\bf D}}-\overline{{\bf B}}\overline{{\bf C}}\\
\overline{{\bf A}}{\bf D}+{\bf B}{\bf C}
&\overline{{\bf A}}\overline{{\bf C}}-{\bf B}\overline{{\bf D}}
\end{bmatrix}.
\]
On the other hand, 
\[ \psi({\bf M})\psi({\bf N})=
\begin{bmatrix}{\bf A}&-\overline{{\bf B}}\\
{\bf B}&\overline{{\bf A}}
\end{bmatrix}
\begin{bmatrix}{\bf C}&-\overline{{\bf D}}\\
{\bf D}&\overline{{\bf C}}
\end{bmatrix}
=\begin{bmatrix}{\bf A}{\bf C}-\overline{{\bf B}}{\bf D}
&-{\bf A}\overline{{\bf D}}-\overline{{\bf B}}\overline{{\bf C}}\\
{\bf B}{\bf C}+\overline{{\bf A}}{\bf D}
&-{\bf B}\overline{{\bf D}}+\overline{{\bf A}}\overline{{\bf C}}
\end{bmatrix}.
\]
Thus $\psi({\bf M}{\bf N})=\psi({\bf M})\psi({\bf N})$ holds. 
\end{proof}

From Lemma \ref{PsiLem}, it follows immediately that

\begin{proposition}
If $m=n$, then $\psi$ is an injective $\mathbb{R}$-algebra homomorphism. 
\end{proposition}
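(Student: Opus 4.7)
The proposition packages three assertions: $\mathbb{R}$-linearity, multiplicativity, and injectivity (together with preservation of the multiplicative identity, which is implicit in ``algebra homomorphism''). Two of these are essentially already available, so the plan is simply to assemble them.

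First I would note that $\mathbb{R}$-linearity of $\psi$ has already been observed immediately after its definition, since each of the four blocks of $\psi({\bf M})$ depends $\mathbb{R}$-linearly on ${\bf M}^{S}$ and ${\bf M}^{P}$, and the symplectic decomposition ${\bf M}={\bf M}^{S}+j{\bf M}^{P}$ itself is $\mathbb{R}$-linear in ${\bf M}$. Multiplicativity $\psi({\bf M}{\bf N})=\psi({\bf M})\psi({\bf N})$ is precisely the content of Lemma \ref{PsiLem} (which applies when $m=n$, making both sides square of size $2n$). It remains to verify that $\psi$ sends the identity of $\Mat(n,\HM)$ to the identity of $\Mat(2n,\CM)$: the symplectic decomposition of ${\bf I}_{n}$ is ${\bf I}_{n}+j\cdot{\bf 0}$, so directly from the definition
\[
\psi({\bf I}_{n})=\begin{bmatrix}{\bf I}_{n}&{\bf 0}\\ {\bf 0}&{\bf I}_{n}\end{bmatrix}={\bf I}_{2n}.
\]

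For injectivity I would use the uniqueness of the symplectic decomposition. Suppose $\psi({\bf M})={\bf 0}$ for some ${\bf M}\in\Mat(n,\HM)$. Reading the upper-left and lower-left blocks of
\[
\psi({\bf M})=\begin{bmatrix}{\bf M}^{S}&-\overline{{\bf M}^{P}}\\ {\bf M}^{P}&\overline{{\bf M}^{S}}\end{bmatrix}
\]
forces ${\bf M}^{S}={\bf 0}$ and ${\bf M}^{P}={\bf 0}$, hence ${\bf M}={\bf M}^{S}+j{\bf M}^{P}={\bf 0}$. Since $\psi$ is $\mathbb{R}$-linear, trivial kernel means injective.

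There is no real obstacle here; the proposition is a bookkeeping consequence of the definition of $\psi$ combined with Lemma \ref{PsiLem}. The only point one needs to be careful about is that the statement requires $m=n$: without squareness, ``algebra homomorphism'' is not meaningful (there is no product structure on rectangular matrices), so one should open the proof by pointing out that squareness ensures both $\Mat(n,\HM)$ and $\Mat(2n,\CM)$ are $\mathbb{R}$-algebras and that Lemma \ref{PsiLem} applies with $m=n$ in both slots.
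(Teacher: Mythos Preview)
Your proof is correct and matches the paper's approach: the paper simply states that the proposition ``follows immediately'' from Lemma~\ref{PsiLem} without further detail, and you have filled in exactly the verifications (linearity, multiplicativity, unit preservation, trivial kernel) that this sentence elides. There is nothing to add.
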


In \cite{Study1920}, Study defined a determinant of an $n{\times}n$ 
quaternionic matrix which we denote by $\Sdet$, 
by setting $\Sdet({\bf M})=\det(\psi({\bf M}))$, 
where $\det$ is the ordinary determinant. 
We call $\Sdet$ the {\em Study determinant}. 
The Study determinant is the unique, up to a real power factor, functional $d_{\HM}$ 
which satisfies the following three axioms \cite{Aslaksen1996}: 
\begin{description}
\item (A1) $d_{\HM}({\bf A})=0{\;\Leftrightarrow\;}{\bf A}$ is singular.
\item (A2) $d_{\HM}({\bf AB})=d_{\HM}({\bf A})d_{\HM}({\bf B})$ for all ${\bf A,B}{\in}\Mat(n,\HM)$.
\item (A3) If ${\bf A}'$ is obtained from ${\bf A}$ by adding a left-multiple of a row to 
another row or a right-multiple of a column to another column, 
then $d_{\HM}({\bf A}')=d_{\HM}({\bf A})$.
\end{description}
Therefore it is reasonable to adopt the Study determinant to investigate 
the quaternionization of determinant expressions for zeta functions of graphs.  
Before stating properties of $\Sdet$, we mention a useful formula 
whose proof can be found in for example \cite{Zhang2011}: 

\begin{lemma}\label{DetPartitionedMat}
If ${\bf A}, {\bf B}, {\bf C}, {\bf D}$ are complex square matrices with same size 
and ${\bf A}{\bf C}={\bf C}{\bf A}$, then 
\[
\det  
\left[ 
\begin{array}{cc}
{\bf A} & {\bf B} \\
{\bf C} & {\bf D}   
\end{array} 
\right] 
= \det ({\bf A}{\bf D}-{\bf C}{\bf B}) . 
\]
\end{lemma}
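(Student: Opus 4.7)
The plan is to prove the identity first under the invertibility assumption on $\mathbf{A}$ by a block LU-type factorization, and then remove the assumption via a standard perturbation/continuity argument.

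First, suppose $\mathbf{A}$ is invertible. I would write the block matrix as the product
\[
\begin{bmatrix} \mathbf{A} & \mathbf{B} \\ \mathbf{C} & \mathbf{D} \end{bmatrix}
=
\begin{bmatrix} \mathbf{I} & \mathbf{0} \\ \mathbf{C}\mathbf{A}^{-1} & \mathbf{I} \end{bmatrix}
\begin{bmatrix} \mathbf{A} & \mathbf{B} \\ \mathbf{0} & \mathbf{D}-\mathbf{C}\mathbf{A}^{-1}\mathbf{B} \end{bmatrix},
\]
take determinants of each triangular block factor (which are products of the diagonal block determinants), and then pull $\mathbf{A}$ inside the Schur-complement determinant to obtain $\det(\mathbf{A}\mathbf{D} - \mathbf{A}\mathbf{C}\mathbf{A}^{-1}\mathbf{B})$. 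At this point the commutation hypothesis $\mathbf{A}\mathbf{C} = \mathbf{C}\mathbf{A}$ comes in crucially: it gives $\mathbf{A}\mathbf{C}\mathbf{A}^{-1} = \mathbf{C}$, which collapses the expression to $\det(\mathbf{A}\mathbf{D} - \mathbf{C}\mathbf{B})$, as desired.

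To handle singular $\mathbf{A}$, I would replace $\mathbf{A}$ by $\mathbf{A}_\epsilon = \mathbf{A} + \epsilon \mathbf{I}$ for $\epsilon \in \mathbb{C}$. Since $\mathbf{A}$ has only finitely many eigenvalues, $\mathbf{A}_\epsilon$ is invertible for all sufficiently small $\epsilon \neq 0$, and the commutation relation is preserved: $\mathbf{A}_\epsilon \mathbf{C} = \mathbf{A}\mathbf{C} + \epsilon \mathbf{C} = \mathbf{C}\mathbf{A} + \epsilon \mathbf{C} = \mathbf{C}\mathbf{A}_\epsilon$. Applying the invertible case to $\mathbf{A}_\epsilon$ and letting $\epsilon \to 0$, the continuity of $\det$ (as a polynomial in the matrix entries) yields the identity in general.

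No step in this plan is really hard; the only place where something nontrivial happens is the use of the commutation hypothesis to identify $\mathbf{A}\mathbf{C}\mathbf{A}^{-1}$ with $\mathbf{C}$ after factoring out $\det \mathbf{A}$. One must also take a little care that the perturbation argument uses $\mathbf{A}_\epsilon = \mathbf{A} + \epsilon \mathbf{I}$ (rather than, say, $\mathbf{A} + \epsilon \mathbf{X}$ for an arbitrary $\mathbf{X}$), precisely so that the commutation with $\mathbf{C}$ is preserved.
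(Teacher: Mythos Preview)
Your argument is correct: the block factorization reduces the identity to $\det(\mathbf{A}\mathbf{D}-\mathbf{A}\mathbf{C}\mathbf{A}^{-1}\mathbf{B})$ when $\mathbf{A}$ is invertible, the commutation hypothesis collapses $\mathbf{A}\mathbf{C}\mathbf{A}^{-1}$ to $\mathbf{C}$, and the perturbation $\mathbf{A}+\epsilon\mathbf{I}$ both preserves the commutation and recovers the general case by continuity. The paper does not actually supply a proof of this lemma---it simply cites the textbook of Zhang---so there is nothing to compare against; your proof is the standard one and is exactly what one finds in that reference.
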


$\Sdet$ has several basic properties as follows: 

\begin{proposition}\label{SdetProperties}{\ }
\renewcommand{\labelenumi}{\rm (\roman{enumi})}
\begin{enumerate}
\item
$\Sdet({\bf M}){\;\in\;}\RM_{\geq 0}=\{a{\in}\RM{\;\mid\;}a{\;\geq\;}0\}$ 
for ${\bf M}{\;\in\;}\Mat(n,\HM)$. 

\item
$\Sdet({\bf M})=0{\ \Leftrightarrow\ }{\bf M} \text{ has no inverse.}$

\item
$\Sdet({\bf MN})=\Sdet({\bf M})\Sdet({\bf N})$ for 
${\bf M},{\bf N}{\;\in\;}\Mat(n,\HM)$.

\item
If ${\bf N}$ is obtained from ${\bf M}$ by adding a left-multiple of a row 
to another row or a right-multiple of a column to another column, 
then $\Sdet({\bf N})=\Sdet({\bf M})$. 

\item
$\Sdet(\alpha{\bf M})=\Sdet({\bf M}\alpha)=|\alpha|^{2n}\Sdet({\bf M})$ 
for ${\bf M}{\;\in\;}\Mat(n,\HM),\,\alpha{\;\in\;}\HM$. 

\item
If ${\bf M}{\;\in\;}\Mat(n,\HM)$ is of the form:
\[
{\bf M}=
\begin{bmatrix}
\lambda_1 & * &{\cdots}& * \\
0 & \lambda_2 & & * \\
{\vdots}& & \ddots &{\vdots}\\
0 & 0 &{\cdots}&\lambda_n
\end{bmatrix} \quad\text{or}\quad 
\begin{bmatrix}
\lambda_1 & 0 &{\cdots}& 0 \\
* & \lambda_2 & & 0 \\
{\vdots}& & \ddots &{\vdots}\\
* & * &{\cdots}&\lambda_n
\end{bmatrix},
\]
then $\Sdet({\bf M})=\prod_{r=1}^n|\lambda_r|^2$.

\item  Let ${\bf A}$ be an $m \times n$ matrix and ${\bf B}$ an $n \times m$ matrix.
Then 
\[
{\rm Sdet} ({\bf I}_m -{\bf A}{\bf B})= {\rm Sdet} ({\bf I}_n -{\bf B}{\bf A}) . 
\]
\end{enumerate}
\end{proposition}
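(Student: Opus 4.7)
The overall strategy is to push each claim through the algebra homomorphism $\psi$ so that $\Sdet$ becomes the ordinary determinant on $\Mat(2n,\CM)$, and then combine Lemma \ref{PsiLem} with the block identity of Lemma \ref{DetPartitionedMat}. Three items drop out almost immediately. Part (iii) is just $\det(\psi({\bf M})\psi({\bf N}))=\det\psi({\bf M})\det\psi({\bf N})$ via Lemma \ref{PsiLem}. Part (vii) follows by rewriting both sides as complex determinants through Lemma \ref{PsiLem} and invoking the Sylvester identity $\det(\mathbf{I}_{2m}-\psi({\bf A})\psi({\bf B}))=\det(\mathbf{I}_{2n}-\psi({\bf B})\psi({\bf A}))$. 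For (v), I would compute $\psi(\alpha\mathbf{I}_n)$ explicitly with $\alpha=a+jb$, apply Lemma \ref{DetPartitionedMat} (whose commutativity hypothesis is trivial for scalar multiples of the identity) to get $\Sdet(\alpha\mathbf{I}_n)=(|a|^2+|b|^2)^n=|\alpha|^{2n}$, and then factor $\alpha\mathbf{M}=(\alpha\mathbf{I}_n)\mathbf{M}$ to finish via (iii); the case $\mathbf{M}\alpha$ is symmetric.

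For (ii), I would use the standard $\RM$-linear bijection $\phi:\HM^n\to\CM^{2n}$, $\phi(\mathbf{v}^S+j\mathbf{v}^P)=(\mathbf{v}^S,\mathbf{v}^P)^T$, and check directly (using $j\alpha=\overline{\alpha}j$ for $\alpha\in\CM$) that $\phi(\mathbf{M}\mathbf{v})=\psi(\mathbf{M})\phi(\mathbf{v})$ for all $\mathbf{v}\in\HM^n$. This identifies the right-kernels of $\mathbf{M}$ on $\HM^n$ and of $\psi(\mathbf{M})$ on $\CM^{2n}$, and for square matrices this forces equivalence of invertibility, so $\Sdet(\mathbf{M})=\det\psi(\mathbf{M})=0$ if and only if $\mathbf{M}$ has no inverse.

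The technical heart is (iv), from which (vi) and (i) cascade. For an elementary matrix $\mathbf{E}=\mathbf{I}_n+\alpha E_{kl}$ with $k\neq l$, the diagonal blocks $\mathbf{E}^S=\mathbf{I}_n+aE_{kl}$ and $\mathbf{E}^P=bE_{kl}$ of $\psi(\mathbf{E})$ commute because $E_{kl}^2=0$, so Lemma \ref{DetPartitionedMat} collapses $\det\psi(\mathbf{E})$ to the determinant of the unipotent triangular matrix $\mathbf{I}_n+(a+\overline{a})E_{kl}$, giving $\Sdet(\mathbf{E})=1$ and hence (iv) via (iii). For (vi), iterated elimination via (iv) clears the off-diagonal entries of $\mathbf{M}$ when all $\lambda_r$ are nonzero, reducing to a diagonal matrix whose Study determinant is $\prod_r|\lambda_r|^2$ by (iii) and (v); if some $\lambda_r=0$ then in the upper-triangular case the first $r$ columns of $\mathbf{M}$ lie in the right-$\HM$ span of $e_1,\ldots,e_{r-1}$, forcing $\mathbf{M}$ to be singular and $\Sdet(\mathbf{M})=0=\prod_r|\lambda_r|^2$ via (ii) (the lower-triangular case is symmetric). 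Finally, (i) follows by the same reduction: a singular $\mathbf{M}$ gives $\Sdet(\mathbf{M})=0$ by (ii), while an invertible $\mathbf{M}$ reduces to a diagonal matrix by elementary operations preserving $\Sdet$, together with row-swap permutations $P$ whose images satisfy $\det\psi(P)=(\det P)^2=1$, yielding $\Sdet(\mathbf{M})=\prod_r|\lambda_r|^2\geq0$. The main obstacle is (iv): the commutativity hypothesis of Lemma \ref{DetPartitionedMat} rests on the nilpotency $E_{kl}^2=0$, and every reduction behind (vi) and (i) presupposes it.
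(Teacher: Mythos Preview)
Your arguments for (iii), (v), and (vii) coincide with the paper's. For (i)--(iv) the paper simply cites Aslaksen \cite{Aslaksen1996}, so your self-contained proofs via $\psi$ and Lemma~\ref{DetPartitionedMat} are additional content; they are correct, modulo a harmless misnomer in (iv): the blocks $\mathbf{E}^S$ and $\mathbf{E}^P$ whose commutativity you invoke are the \emph{left-column} blocks of $\psi(\mathbf{E})$ (Lemma~\ref{DetPartitionedMat} requires $\mathbf{A}\mathbf{C}=\mathbf{C}\mathbf{A}$, with $\mathbf{C}$ the lower-left block), not the diagonal ones. The genuine methodological difference is in (vi): the paper computes $\det\psi(\mathbf{M})$ directly by iterated Laplace expansion along columns $1$ and $n{+}1$, peeling off a factor $|\lambda_1|^2$ and recursing on the remaining $2(n{-}1)\times 2(n{-}1)$ block, with no appeal to (i)--(iv); your elimination route via (iv), with the singular case handled through (ii), is a valid alternative but makes (vi) depend on the earlier parts rather than stand alone.

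One small repair is needed in both your (vi) and (i): once you reach $\operatorname{diag}(\lambda_1,\ldots,\lambda_n)$, the value $\prod_r|\lambda_r|^2$ does not follow from (iii) and (v) as stated, since (v) only treats scalar multiples $\alpha\mathbf{I}_n$ and a general diagonal matrix is not of that form. You need one more direct application of Lemma~\ref{DetPartitionedMat} to the diagonal matrix itself (its simplex and perplex parts are diagonal, hence commute), giving $\det\psi(\operatorname{diag}(\lambda_r))=\det\operatorname{diag}(|\lambda_r^S|^2+|\lambda_r^P|^2)=\prod_r|\lambda_r|^2$. With that line inserted, your chain (iv) $\Rightarrow$ (vi) $\Rightarrow$ (i) goes through cleanly.
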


\begin{proof}
The proofs of (i), (ii), (iii), (iv) can be found in \cite{Aslaksen1996}. 
We prove (v), (vi) and (vii). 

\noindent
(v)\ \ Let ${\bf M} \in \Mat(n,\HM)$ and 
$\alpha = \alpha {}_s +j \alpha {}_p \in {\HM} ( \alpha {}_s , \alpha {}_p \in {\CM} )$. 
Then using Lemma \ref{PsiLem} and Lemma \ref{DetPartitionedMat}, we have 
\begin{equation*}
\begin{split}
{\Sdet} ( \alpha {\bf M}) &= \det ( \psi ( \alpha {\bf M}))
=\det ( \psi ( \alpha {\bf I}_n ) \psi ({\bf M})) \\
&=\det  
\begin{bmatrix}
\alpha {}_s {\bf I}_n & - \overline{ \alpha } {}_p {\bf I}_n \\ 
\alpha {}_p {\bf I}_n & \overline{ \alpha } {}_s {\bf I}_n  
\end{bmatrix}
\det (\psi ({\bf M})) \\ 
&=  
\det (( \alpha {}_s {\bf I}_n )( \overline{ \alpha } {}_s {\bf I}_n )
+( \alpha {}_p {\bf I}_n )( \overline{ \alpha } {}_p {\bf I}_n )) 
{\Sdet} ({\bf M}) \\ 
&= 
\det (( | \alpha {}_s |^2 +| \alpha {}_p |^2 ) {\bf I}_n ) {\rm Sdet} ({\bf M}) \\ 
&= | \alpha |^{2n} {\Sdet} ({\bf M}). 
\end{split}
\end{equation*}
In the same way, we can deduce $\Sdet({\bf M}\alpha)=|\alpha|^{2n}\Sdet({\bf M})$. 

\noindent
(vi)\ \ For a $2n{\times}2n$ matrix ${\bf N}$ and any two subsets 
$I=\{i_1,i_2,{\cdots},i_r\},\, J=\{j_1,j_2,{\cdots},j_s\}$ of $[2n]$, 
${\bf N}^{IJ}$ denotes the submatrix obtained from ${\bf N}$ by deleting 
$i_1,i_2,{\cdots},i_r$ th rows and $j_1,j_2,{\cdots},j_s$ th columns. 
Then by definitions of $\Sdet$ and $\psi$, we get the following: 
\begin{equation*}
\begin{split}
\Sdet({\bf M})&=
{\det}(\psi({\bf M}))
={\det} \begin{bmatrix}{\bf M}^S & -\overline{{\bf M}^P}\\
{\bf M}^P & \overline{{\bf M}^S}
\end{bmatrix}\\
&={\det} \begin{bmatrix}
\lambda_1^{S}& * & {\cdots} & * & -\overline{\lambda_1^{P}} & * & {\cdots} & * \\
0 &\lambda_2^{S}&  & * & 0 & -\overline{\lambda_2^{P}} & & * \\
{\vdots} &   & \ddots & {\vdots} & {\vdots} &   & \ddots & {\vdots} \\
0 & 0 & {\cdots} &\lambda_n^{S} & 0 & 0 & {\cdots} & -\overline{\lambda_n^{P}} \\
\lambda_1^{P} & * & {\cdots} & * & \overline{\lambda_1^{S}}& * & {\cdots} & * \\
0 & \lambda_2^{P} &  & * & 0 &\overline{\lambda_2^{S}}& & * \\
{\vdots} &   & \ddots & {\vdots} & {\vdots} &   & \ddots & {\vdots} \\
0 & 0 & {\cdots} & \lambda_n^{P} & 0 & 0 & {\cdots} &\overline{\lambda_n^{S}}
\end{bmatrix}\\
&=\lambda_1^{S}\det(\psi({\bf M})^{\{1\}\{1\}})
+(-1)^{n+2}\lambda_1^{P}\det(\psi({\bf M})^{\{n+1\}\{1\}})\\
&=\lambda_1^{S}\overline{\lambda_1^{S}}\det(\psi({\bf M})^{\{1,n+1\}\{1,n+1\}})
+\lambda_1^{P}\overline{\lambda_1^{P}}\det(\psi({\bf M})^{\{1,n+1\}\{1,n+1\}})\\
&=|\lambda_1|^2\det(\psi({\bf M})^{\{1,n+1\}\{1,n+1\}})\\
&=|\lambda_1|^2|\lambda_2|^2\det(\psi({\bf M})^{\{1,2,n+1,n+2\}\{1,2,n+1,n+2\}})={\cdots}\\
&=\prod_{r=1}^n|\lambda_r|^2.
\end{split}
\end{equation*}

\noindent
(vii)\ \ Let ${\bf A}$ be an $m \times n$ matrix and ${\bf B}$ an $n \times m$ matrix.
Then by the definition of the Study determinant, we have 
\begin{equation*}
\begin{split}
\Sdet({\bf I}_m-{\bf A}{\bf B})&={\rm Sdet}(\psi({\bf I}_m -{\bf A}{\bf B})) 
=\det({\bf I}_{2m}-\psi({\bf A})\psi({\bf B})) \\
&=\det({\bf I}_{2n}-\psi({\bf B})\psi({\bf A})) 
=\det(\psi({\bf I}_n-{\bf B}{\bf A}))\\
&=\Sdet({\bf I}_n-{\bf B}{\bf A}). 
\end{split}
\end{equation*}
Here the equation $\det({\bf I}_{2m}-\psi({\bf A})\psi({\bf B}))
=\det({\bf I}_{2n}-\psi({\bf B})\psi({\bf A}))$ is based on the 
property of determinant that can be found in, for example, 
Problem 5 on page 47 in \cite{Zhang2011}. 
\end{proof}

\begin{remark}
$\Sdet$ is not multilinear as $\det$ is. Furthermore, 
$\Sdet({}^T\!{\bf M})=\Sdet({\bf M})$ does not hold in general where 
${}^T\!{\bf M}$ is the transpose of ${\bf M}$. 
\end{remark}

\section{The quaternionic second weighted zeta function of a graph} 
We follow symbols and notations in the previous section. 
We shall give the definition of a quaternionic analogue of the second weighted zeta function 
and derive its determinant expression of Bass type by the Study determinant. 
In the same way as the complex case, 
consider a quaternionic matrix 
${\bf W} =({\bf W}_{uv})_{u,v{\in}V(G)}{\;\in\;}\Mat(n,\HM)$ with $(u,v)$-entry 
equals $0$ if $(u,v){\;\notin\;}D(G)$. 
We call ${\bf W}$ a {\em quaternionic weighted matrix} of $G$.
Furthermore, let $w(u,v)= {\bf W}_{uv}$ for $u,v \in V(G)$ and 
$w(e)= w(u,v)$ if $e=(u,v) \in D(G)$. 
For each path $P=( e_1 , \cdots , e_{\ell} )$ of $G$, the {\em norm} 
$w(P)$ of $P$ is defined by 
$w(P)=w(e_1)w(e_2){\cdots}w(e_{\ell})$.

For a quaternionic weighted matrix ${\bf W}$ of $G$, the $2m \times 2m$ 
quaternionic matrix 
${\bf B}_w=( {\bf B}^{(w)}_{ef} )_{e,f \in D(G)}{\;\in\;}\Mat(2m,\HM)$ 
is defined as follows: 
\begin{equation}\label{DefBw}
{\bf B}^{(w)}_{ef} =\left\{
\begin{array}{ll}
w(f) & \mbox{if $t(e)=o(f)$, } \\
0 & \mbox{otherwise.}
\end{array}
\right.
\end{equation}
We define the {\em quaternionic second weighted zeta function} of $G$ to be 
as follows: 
\[
{\bf Z}^{\HM}_1 (G,w,t)= \Sdet ( {\bf I}_{2m} -t ( {\bf B}_w - {\bf J}_0 ) )^{-1},  
\]
where $t$ is a quaternionic variable.

The Study determinant expression of Bass type for the quaternionic second 
weighted zeta function of a graph is given as follows. 
The block diagonal sum \( {\bf M}_{1} \oplus \cdots \oplus {\bf M}_{s} \) 
of square matrices 
${\bf M}_{1}, \cdots , {\bf M}_{s}$ is defined as the square matrix: 
\[
\left[
\begin{array}{ccc}
{\bf M}_{1} & \: & 0 \\
\: & \ddots & \: \\
0 & \: & {\bf M}_{s} 
\end{array}
\right].
\]
If \( {\bf M}_{1} = {\bf M}_{2} = \cdots = {\bf M}_{s} = {\bf N} \),
then we write 
\( s \circ {\bf N} = {\bf M}_{1} \oplus \cdots \oplus {\bf M}_{s} \).
The {\it Kronecker product} $ {\bf A} \bigotimes {\bf B} $
of matrices {\bf A} and {\bf B} is considered as the matrix 
{\bf A} having the element $a_{rs}$ replaced by the matrix $a_{rs} {\bf B}$.

\begin{theorem}\label{DetExpressionOfBassType}
Let $G$ be a connected graph, and 
let ${\bf W}$ be a quaternionic weighted matrix of $G$. 
Then the reciprocal of the quaternionic second weighted zeta function of $G$ is given by 
\[
{\bf Z}^{\HM}_1 (G,w,t )^{-1} =|1- t^2 |^{2m-2n} 
{\rm Sdet} ({\bf I}_n - {\bf W}t+( {\bf D}_w - {\bf I}_n ) t^2 ) , 
\]
where $n=|V(G)|$, $m=|E(G)|$ and 
${\bf D}_w=({\bf D}^{(w)}_{uv})_{u,v{\in}V(G)}$ is the diagonal matrix 
with ${\bf D}^{(w)}_{uu} = \sum_{e:o(e)=u}w(e)$. 
\end{theorem}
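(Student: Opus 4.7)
The plan is to mimic the classical block-matrix proof of the Bass-type formula used for the complex case in Theorem~\ref{SatoThm}, replacing $\det$ by $\Sdet$ throughout and invoking the properties of Proposition~\ref{SdetProperties} at each step to handle the noncommutativity of $\HM$.

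First, I introduce two $n \times 2m$ quaternionic incidence matrices ${\bf K}$ and ${\bf L}$ defined by ${\bf K}_{v,e} = 1$ if $t(e)=v$ and $0$ otherwise, and ${\bf L}_{v,e} = w(e)$ if $o(e)=v$ and $0$ otherwise. A direct calculation from the definitions yields the three key identities
\[
{}^T\!{\bf K}\,{\bf L} = {\bf B}_w,\qquad {\bf L}\,{}^T\!{\bf K} = {\bf W},\qquad {\bf L}\,{\bf J}_0\,{}^T\!{\bf K} = {\bf D}_w,
\]
the last using the substitution $e \mapsto e^{-1}$ together with $\sum_{e:o(e)=u}w(e) = ({\bf D}_w)_{uu}$. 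Since ${\bf J}_0^2 = {\bf I}_{2m}$ and the scalar $t$ commutes with the real matrix ${\bf J}_0$, we have $({\bf I}_{2m}+t{\bf J}_0)({\bf I}_{2m}-t{\bf J}_0) = (1-t^2){\bf I}_{2m}$, hence $({\bf I}_{2m}+t{\bf J}_0)^{-1} = ({\bf I}_{2m}-t{\bf J}_0)(1-t^2)^{-1}$ whenever $1-t^2 \ne 0$. Decomposing ${\bf J}_0$ into $m$ disjoint transpositions and applying parts (iv) and (vi) of Proposition~\ref{SdetProperties} gives $\Sdet({\bf I}_{2m}+t{\bf J}_0) = |1-t^2|^{2m}$.

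The heart of the proof is to compute the Study determinant of the $(2m+n) \times (2m+n)$ quaternionic block matrix
\[
{\bf U}(t) = \begin{pmatrix} {\bf I}_{2m}+t{\bf J}_0 & t\,{}^T\!{\bf K} \\ {\bf L} & {\bf I}_n \end{pmatrix}
\]
in two different ways. The Schur-complement identity for $\Sdet$ follows from block-triangularizing via Proposition~\ref{SdetProperties}(iv) together with block-diagonal multiplicativity of $\Sdet$, which is immediate from its definition through $\psi$. Eliminating via the bottom-right block ${\bf I}_n$ gives $\Sdet({\bf U}(t)) = \Sdet({\bf I}_{2m}+t{\bf J}_0 - t\,{}^T\!{\bf K}\,{\bf L}) = \Sdet({\bf I}_{2m}-t({\bf B}_w-{\bf J}_0)) = {\bf Z}^{\HM}_1(G,w,t)^{-1}$, while eliminating via the top-left block yields $\Sdet({\bf U}(t)) = |1-t^2|^{2m}\,\Sdet\bigl({\bf I}_n - {\bf L}({\bf I}_{2m}+t{\bf J}_0)^{-1}(t\,{}^T\!{\bf K})\bigr)$. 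Substituting the explicit inverse and expanding using ${\bf L}\,{}^T\!{\bf K} = {\bf W}$ and ${\bf L}\,{\bf J}_0\,{}^T\!{\bf K} = {\bf D}_w$, I obtain
\[
{\bf L}({\bf I}_{2m}+t{\bf J}_0)^{-1}(t\,{}^T\!{\bf K}) = ({\bf W}t - {\bf D}_w t^2)(1-t^2)^{-1},
\]
so the right-hand Schur block equals $\bigl({\bf I}_n - {\bf W}t + ({\bf D}_w - {\bf I}_n)t^2\bigr)(1-t^2)^{-1}$. Proposition~\ref{SdetProperties}(v) then produces the factor $|1-t^2|^{-2n}$, and equating the two computations of $\Sdet({\bf U}(t))$ yields the claimed identity (first when $1-t^2 \ne 0$; the general case follows by continuity in the four real components of $t$).

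The main obstacle is the noncommutativity of $\HM$: the quaternion $(1-t^2)^{-1}$ does not commute with the weight entries $w(e)$ of ${\bf L}$, so the multiplication order must be tracked carefully when expanding the right-hand Schur block. The argument works because $(1-t^2)^{-1}$ and $t$ commute with each other (both lie in the commutative subring of $\HM$ generated by $t$) and only need to pass through matrices (${\bf K}$, ${\bf J}_0$) whose entries are real; the final right-multiplication by $(1-t^2)^{-1}$ is absorbed into the Study determinant via Proposition~\ref{SdetProperties}(v), which requires no commutation between $\alpha$ and the entries of ${\bf M}$.
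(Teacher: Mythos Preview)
Your proof is correct and follows essentially the same route as the paper: the same incidence-matrix factorisation ${\bf B}_w={}^T\!{\bf K}\,{\bf L}$, the same inverse $({\bf I}_{2m}+t{\bf J}_0)^{-1}=({\bf I}_{2m}-t{\bf J}_0)(1-t^2)^{-1}$ together with the commutation of $t$ and $(1-t^2)^{-1}$, and the same entrywise identification of ${\bf W}-{\bf D}_w t$. The only cosmetic differences are that the paper packages the size-reduction step via Proposition~\ref{SdetProperties}(vii) (the Sylvester identity $\Sdet({\bf I}_{2m}-{\bf A}{\bf B})=\Sdet({\bf I}_n-{\bf B}{\bf A})$) after first factoring ${\bf I}_{2m}-t({\bf B}_w-{\bf J}_0)=({\bf I}_{2m}-t{\bf B}_w({\bf I}_{2m}+t{\bf J}_0)^{-1})({\bf I}_{2m}+t{\bf J}_0)$, whereas you obtain the same two Schur complements from a single $(2m+n)\times(2m+n)$ block matrix; and the paper evaluates $\Sdet({\bf I}_{2m}+t{\bf J}_0)$ by an explicit symplectic decomposition and Lemma~\ref{DetPartitionedMat}, while your use of (iv) and (vi) on the $m$ disjoint $2\times2$ blocks is a slightly quicker alternative.
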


\begin{proof} Let 
$D(G)= \{ f_1 , \cdots , f_{m} , f^{-1}_1 , \cdots , f^{-1}_{m} \} $. 
Arrange arcs of $G$ as follows: 
\[
f_{1}, f^{-1}_1 , \cdots , f_{m}, f^{-1}_{m} . 
\] 
By the definition of the second weighted zeta function of $G$ and 
Proposition \ref{SdetProperties}, we have 
\begin{equation}\label{ReciprocalSecondZeta}
\begin{split}
{\bf Z}^{\HM}_1 (G,w,t )^{-1} & = {\Sdet}({\bf I}_{2m}-t({\bf B}_w-{\bf J}_0)) \\ 
& = {\Sdet}({\bf I}_{2m}+t {\bf J}_0-t{\bf B}_w) \\
& = {\Sdet} ({\bf I}_{2m}-t{\bf B}_w({\bf I}_{2m}+t {\bf J}_0)^{-1}) 
{\Sdet}({\bf I}_{2m}+t{\bf J}_0 ) . 
\end{split}
\end{equation}

Let $t= t_s +j t_p \in {\HM}$ be the symplectic decomposition. 
Then we have 
\begin{equation}\label{Blocks}
{\rm Sdet} ( {\bf I}_{2m} +t {\bf J}_0 )= 
\det  
\left[ 
\begin{array}{cc}
{\bf I}_{2m} + t_s {\bf J}_0 & - \overline{t_p} {\bf J}_0 \\ 
t_p {\bf J}_0 & {\bf I}_{2m} + \overline{t_s} {\bf J}_0   
\end{array} 
\right],
\end{equation}
where ${\bf I}_{2m} + t_s {\bf J}_0 $ and $t_p {\bf J}_0 $ are given by
\[
{\bf I}_{2m} + t_s {\bf J}_0 = m \circ \left[ 
\begin{array}{cc}
1 & t_s \\
t_s & 1   
\end{array} 
\right] 
\ , \  
t_p {\bf J}_0 = m \circ \left[ 
\begin{array}{cc}
0 & t_p \\
t_p & 0   
\end{array} 
\right] 
. 
\]
For any two complex numbers $\alpha$ and $\beta$, 
\[
\left[ 
\begin{array}{cc}
1 & \alpha \\
\alpha & 1   
\end{array} 
\right] 
\  
\left[ 
\begin{array}{cc}
0 & \beta \\
\beta & 0   
\end{array} 
\right] 
= 
\left[ 
\begin{array}{cc}
0 & \beta \\
\beta & 0   
\end{array} 
\right] 
\ 
\left[ 
\begin{array}{cc}
1 & \alpha \\
\alpha & 1   
\end{array} 
\right] 
\]
holds. This implies any two blocks in the right-hand side of (\ref{Blocks}) commute. 
Thus by Lemma \ref{DetPartitionedMat}, we have 
\begin{equation}\label{SdetIPlusJ}
\begin{split}
{\Sdet}({\bf I}_{2m}+t{\bf J}_0) & = \det(({\bf I}_{2m}+t_s {\bf J}_0)({\bf I}_{2m}+\overline{t_s} {\bf J}_0 )+ t_p \overline{t_p} {\bf J}^2_0 ) \\
& = \det ( {\bf I}_{2m} +( t_s + \overline{t_s} ) {\bf J}_0 +(|t_s|^2 +|t_p|^2 ) {\bf I}_{2m} ) \\
& = \det 
{\bf I}_m \bigotimes 
\left[ 
\begin{array}{cc}
1+|t|^2 & 2\RP\,t \\ 
2\RP\,t & 1+|t|^2  
\end{array} 
\right] 
\\ 
& = \{ (1+|t|^2 )^2 -4(\RP\,t)^2 \} {}^m \\
& = \{ (1+tt^*)^2 -(t+t^*)^2 \}^m \\
& = \{ (1-t^2)(1-(t^*)^2) \}^m \\
& = \{ (1-t^2)(1-t^2)^* \}^m \\
& = |1-t^2|^{2m}
\end{split}
\end{equation}
On the other hand, since the following holds: 
\begin{equation}\label{CommtativityOft}
t\dfrac{1}{1-t^2}=t\dfrac{(1-(t^2))^*}{|1-t^2|^2}=t\dfrac{1-t^*t^*}{|1-t^2|^2} 
=\dfrac{1-t^*t^*}{|1-t^2|^2}t=\dfrac{1}{1-t^2}t,
\end{equation}
and hence
\begin{equation}\label{InverseOfIPlusJ}
({\bf I}_{2m}+t{\bf J}_0)({\bf I}_{2m}-t{\bf J}_0)\dfrac{1}{1-t^2}
=({\bf I}_{2m}-t{\bf J}_0)\dfrac{1}{1-t^2}({\bf I}_{2m}+t{\bf J}_0)
={\bf I}_{2m}. 
\end{equation}
Thus, we have 
\[
({\bf I}_{2m}+t{\bf J}_0)^{-1} =({\bf I}_{2m}-t{\bf J}_0)\dfrac{1}{1-t^2}. 
\]
Now, let ${\bf K} =( {\bf K}_{ev} )$ ${}_{e \in D(G), v \in V(G)} $ 
and ${\bf L} =( {\bf L}_{ev} )_{e \in D(G), v \in V(G)} $ be 
$2m \times n$ matrices defined as follows: 
\[
{\bf K}_{ev} =\left\{
\begin{array}{ll}
w(e) & \mbox{if $o(e)=v$, } \\
0 & \mbox{otherwise, } 
\end{array}
\right.
{\bf L}_{ev} =\left\{
\begin{array}{ll}
1 & \mbox{if $t(e)=v$, } \\
0 & \mbox{otherwise. } 
\end{array}
\right.
\] 
Then 
\[
{\bf L} {}^T\!{\bf K} = {\bf B}_w
\]
holds, where ${}^T\!{\bf K}$ is the transpose of ${\bf K}$. 
Thus, by Lemma \ref{SdetProperties}, (\ref{CommtativityOft}) 
and (\ref{InverseOfIPlusJ}), 
we can show that  
\begin{equation*}
\begin{split}
&{\Sdet}({\bf I}_{2m}-t{\bf B}_w({\bf I}_{2m}+t{\bf J}_0)^{-1})   
={\Sdet}({\bf I}_{2m}-t{\bf L}{}^T\!{\bf K}({\bf I}_{2m}+t{\bf J}_0)^{-1}) \\  
& = {\Sdet}({\bf I}_{n}-{}^T\!{\bf K}({\bf I}_{2m}+t{\bf J}_0)^{-1}(t{\bf L})) 
={\Sdet}({\bf I}_{n}-{}^T\!{\bf K}({\bf I}_{2m}+t{\bf J}_0)^{-1}{\bf L}t) \\ 
& = {\Sdet}({\bf I}_{n}-{}^T\!{\bf K}({\bf I}_{2m}-t{\bf J}_0)\dfrac{1}{1-t^2}
{\bf L}t) 
= {\Sdet}({\bf I}_{n}-{}^T\!{\bf K}({\bf I}_{2m}-t {\bf J}_0){\bf L}t\dfrac{1}{1-t^2}). 
\end{split}
\end{equation*}
For an arc $(u,v) \in D(G)$, 
\[
({}^T\!{\bf K}({\bf I}_{2m} -t {\bf J}_0 ) {\bf L} )_{uv} 
=w(u,v) .  
\]
In the case of $u=v$,  
\[
({}^T\!{\bf K}({\bf I}_{2m} -t {\bf J}_0 ) {\bf L} )_{uu} 
=- \sum_{o(e)=u} w(e)t . 
\]
Thus by Proposition \ref{SdetProperties}, we have 
\begin{equation*}
\begin{split}
{\Sdet}({\bf I}_{2m}-t{\bf B}_w({\bf I}_{2m}+t {\bf J}_0)^{-1}) 
& = {\Sdet}({\bf I}_{n}-({\bf W}-{\bf D}_w t)t\dfrac{1}{1-t^2}) \\
& = {\Sdet}(((1-t^2 ) {\bf I}_{n}-{\bf W}t+{\bf D}_w t^2 )\dfrac{1}{1-t^2}) \\
& = {\Sdet}({\bf I}_{n}-{\bf W}t+({\bf D}_w-{\bf I}_n)t^2 )
\Big{|}\dfrac{1}{1-t^2}\Big{|}^{2n}.  
\end{split}
\end{equation*}
Finally, we conclude from (\ref{ReciprocalSecondZeta}) and (\ref{SdetIPlusJ}) that
\begin{equation*}
\begin{split}
{\bf Z}^{\HM}_1 (G,w,t)^{-1} 
& = {\Sdet}({\bf I}_{2m}-t{\bf B}_w({\bf I}_{2m}+t{\bf J}_0 )^{-1})
{\rm Sdet}({\bf I}_{2m}+t{\bf J}_0) \\ 
& = {\Sdet}({\bf I}_{n}-{\bf W}t+({\bf D}_w-{\bf I}_n)t^2)
\Big{|}\dfrac{1}{1-t^2}\Big{|}^{2n} 
|1-t^2|^{2m} \\ 
& = |1- t^2 |^{2m-2n} 
{\Sdet}({\bf I}_{n}-{\bf W}t+({\bf D}_w-{\bf I}_n)t^2). 
\end{split}
\end{equation*}
\end{proof}

\section{The Euler product for the quaternionic second \\ weighted zeta function}
In this section, we derive the Euler product of the quaternionic second weighted zeta 
function. 
In order to obtain the Euler product, we make use of the notion of noncommutative 
formal power series. 
Our argument on this subject is based on the proofs of Amitsur's identity in 
\cite{RS1987} or \cite{FZ1999} until (\ref{AmitsurForMatrix}). 
For the sake of argument, we will give a brief account of noncommutative 
formal power series at first. 
A detailed exposition of formal series can be found in \cite{BR2011}. 
Let $X=\{x_1,{\cdots},x_N\}$ be a finite nonempty totally ordered set in which 
elements are arranged ascendingly. 
$X^*$ denotes the free monoid generated by 
$X$. Let $<$ be the lexicographic order on $X^*$ derived from the total order on $X$. 
For a word $w=x_{i_1}x_{i_2}{\cdots}x_{i_d}{\;\in\;}X^*$, 
$d$ is called the {\it length} of $w$ which is denoted by $|w|$. 
The length of the empty word is defined to be $0$. 
A nonempty word $w$ in $X^*$ is called a {\it Lyndon word} if $w$ is {\it prime}, 
namely, not a power $w'^r$ of any other word $w'$ for any $r\geq2$, and 
is minimal in the cyclic rearrangements of $w$. We denote by $L_X$ the set of Lyndon words 
in $X^*$. 
It is well known that any nonempty word $w$ can be formed uniquely 
as a nonincreasing sequence of Lyndon words. 

\begin{theorem}\label{LyndonFactorization}
For any nonempty word $w{\;\in\;}X^*$, there exists a unique nonincreasing sequence of 
Lyndon words $l_1,l_2,\cdots,l_d$ such that $w=l_1l_2{\cdots}l_d$. 
\end{theorem}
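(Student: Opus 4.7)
The plan is to prove the Chen–Fox–Lyndon theorem by establishing existence via an iterative refinement of the trivial factorization, and uniqueness via a characterization of the last Lyndon factor as a distinguished suffix.

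For existence, I would begin from the trivial factorization $w = x_{i_1} x_{i_2} \cdots x_{i_d}$ into single letters (each letter is trivially a Lyndon word). If this sequence is already nonincreasing, we are done. Otherwise, there exist adjacent factors $l_k, l_{k+1}$ with $l_k < l_{k+1}$ lexicographically; I would replace this pair by its concatenation $l_k l_{k+1}$. The key lemma required here is that if $u$ and $v$ are Lyndon words with $u < v$, then $uv$ is itself a Lyndon word. I would prove this by verifying that $uv$ is lexicographically strictly smaller than each of its proper nontrivial cyclic rotations: for a rotation cutting inside $u$ the comparison reduces to the fact that $u$ is Lyndon together with $u < v$; for a rotation cutting at the boundary one uses $u < v$ directly; for a rotation cutting inside $v$ one combines Lyndonness of $v$ with a case analysis on whether $v$ is a prefix of $uv$ or not. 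Iterating this merging operation terminates because the number of factors strictly decreases at each step, and the result is a nonincreasing Lyndon factorization.

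For uniqueness, I would use the following characterization: if $w = l_1 l_2 \cdots l_d$ is any nonincreasing Lyndon factorization of $w$, then $l_d$ is precisely the lexicographically smallest nonempty suffix of $w$. To prove this, let $s$ be any nonempty suffix. If $s$ lies entirely within $l_d$, then $s \geq l_d$ because $l_d$ is Lyndon, hence strictly less than any of its proper suffixes. If $s$ straddles factor boundaries, write $s = l_k' l_{k+1} \cdots l_d$ where $l_k'$ is a nonempty suffix of $l_k$; since $l_k$ is Lyndon we get $l_k' \geq l_k \geq l_d$, and comparing $s$ with $l_d$ letter by letter then yields $s \geq l_d$ (using that $l_k' \geq l_d$ already at the first letters). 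This pins down $l_d$ uniquely from $w$, so uniqueness of the full factorization follows by induction on length applied to $w l_d^{-1}$, provided we check that the factorization of $w l_d^{-1} = l_1 \cdots l_{d-1}$ remains nonincreasing — which it does by hypothesis.

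The main obstacle I anticipate is the characterization of $l_d$ as the smallest suffix, because the straddling case requires one to argue that $l_k' \geq l_d$ forces $s = l_k' l_{k+1} \cdots l_d > l_d$ and not merely $\geq l_d$. The subtlety is to rule out $s = l_d$, which would force $l_k'$ to be a proper prefix of $l_d$ of the right sort; this is handled by noting that if $l_k'$ were a proper prefix of $l_d$, then by Lyndonness of $l_k$ we would have $l_k' \geq l_k \geq l_d$, forcing $l_k' = l_d$ as words, contradicting $l_k'$ being a proper suffix of $l_k$ shorter than $l_d$ unless $|l_k| > |l_d|$, in which case one descends further. All other ingredients (the merging lemma for existence and the inductive step for uniqueness) are comparatively routine once the suffix characterization is in hand.
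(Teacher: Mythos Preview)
The paper does not actually prove this theorem; it simply cites Lothaire, \emph{Combinatorics on Words}. Your outline is precisely the standard Chen--Fox--Lyndon argument one finds there: existence by iteratively merging adjacent increasing Lyndon factors via the lemma that $u<v$ Lyndon implies $uv$ Lyndon, and uniqueness by characterising $l_d$ as the lexicographically smallest nonempty suffix of $w$ and inducting on $|w|$.

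Your stated obstacle in the last paragraph is overblown, and the attempted resolution there is the only muddled part of the proposal. In the straddling case $s = l_k' l_{k+1}\cdots l_d$ with $k<d$ and $l_k'$ a nonempty suffix of $l_k$, you already have $l_k' \ge l_k \ge l_d$ (a proper suffix of a Lyndon word strictly exceeds it; otherwise $l_k'=l_k$). Now either $l_k'$ and $l_d$ first differ at some position within both, in which case $s>l_d$ at that position, or else one is a prefix of the other; but $l_k' \ge l_d$ rules out $l_k'$ being a proper prefix of $l_d$, so $l_d$ is a prefix of $l_k'$ and hence a proper prefix of $s$ (as $|s|>|l_d|$), giving $s>l_d$. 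No further descent is needed, and the case you worry about---$l_k'$ a proper prefix of $l_d$---is already excluded by $l_k'\ge l_d$. With this streamlining the argument is complete and correct.
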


\begin{proof} For the proof, see for example \cite{Lothaire1997}. 
\end{proof}

Let us consider the ring of noncommutative formal power series $\RM{\laangle}X^*{\raangle}$. 
Each element $f$ of $\RM{\laangle}X^*{\raangle}$ is displayed as 
\[
f=\sum_{w{\in}X^*}f_w w \quad (f_w{\;\in\;}\RM).
\]
$\RM{\laangle}X^*{\raangle}$ can be equipped with the topology defined by 
the following manner. 
Let $\omega$ be the function defined as follows: 
\begin{equation*}\label{UltrametricDistance}
\begin{split}
\omega :\,&\RM{\laangle}X^*{\raangle}{\times}\RM{\laangle}X^*{\raangle}
{\longrightarrow}\NM{\;\cup\;}\{\infty\}=\{0,1,2,{\cdots},\infty\} \\
& \omega(\alpha,\beta)=\begin{cases}
\infty \quad  
\text{if $\{w{\;\in\;}X^*\;|\;\alpha_w{\;\neq\;}\beta_w \}=\phi $},\\
\inf \{n{\;\in\;}\NM\;|\;\exists w{\;\in\;}X^*,\,|w|=n,\, 
\alpha_w{\;\neq\;}\beta_w \} \quad \text{otherwise}.
\end{cases}
\end{split}
\end{equation*}
Then an ultrametric distance $d_{\omega}$ on $\RM{\laangle}X^*{\raangle}$ 
is given by $d_{\omega}(\alpha,\beta)=2^{-\omega(\alpha,\beta)}$ and a topology on 
$\RM{\laangle}X^*{\raangle}$ is derived from $d_{\omega}$. 
We notice that $\RM{\laangle}X^*{\raangle}$ is complete for this topology. 
Since $(1-l)^{-1}=1+l+l^2+{\cdots}$ for every $l{\;\in\;}X^*$ 
in $\RM{\laangle}X^*{\raangle}$, Theorem \ref{LyndonFactorization} implies 
\begin{equation}\label{SumOfWords1}
\prod_{l{\in}L_X}^{>}(1-l)^{-1}=\sum_{w{\in}X^*}w,
\end{equation}
in $\RM{\laangle}X^*{\raangle}$, 
where $\displaystyle \prod_{l{\in}L_X}^{>}$ means that the factors are multiplied 
in decreasing order. 
On the other hand, it follows that 
\begin{equation}\label{SumOfWords2}
\sum_{w{\in}X^*}w=\{1-(x_1+{\cdots}+x_N)\}^{-1}.
\end{equation}
(\ref{SumOfWords1}) and (\ref{SumOfWords2}) imply the following equation: 
\begin{equation}\label{InveseOfAmitsurIdentity}
\{1-(x_1+{\cdots}+x_N)\}^{-1}=\prod_{l{\in}L_X}^{>}(1-l)^{-1}.
\end{equation}
From (\ref{InveseOfAmitsurIdentity}), we obtain 
\begin{proposition}
\begin{equation}\label{AmitsurIdentity}
1-(x_1+{\cdots}+x_N)=\prod_{l{\in}L_X}^{<}(1-l),
\end{equation}
where $\displaystyle \prod_{l{\in}L_X}^{<}$ means that the factors are multiplied 
in increasing order. 
\end{proposition}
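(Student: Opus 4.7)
The plan is to deduce (\ref{AmitsurIdentity}) directly from (\ref{InveseOfAmitsurIdentity}) by inverting both sides, using the fact that in a noncommutative ring the inverse of a finite product $A_1 A_2 \cdots A_k$ equals $A_k^{-1} \cdots A_2^{-1} A_1^{-1}$. Applied to a decreasing product of $(1-l)^{-1}$'s, inversion produces the corresponding increasing product of $(1-l)$'s. The combinatorial content is already packaged in (\ref{InveseOfAmitsurIdentity}) and Theorem \ref{LyndonFactorization}; what remains is the topological bookkeeping for the infinite product.

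To execute this, I would first truncate by length. For each integer $n \geq 1$, let $L_X^{(\leq n)} = \{ l \in L_X : |l| \leq n \}$, which is finite and totally ordered by the lexicographic order $<$. Set
\[
U_n = \prod_{l \in L_X^{(\leq n)}}^{>}(1-l)^{-1}, \qquad V_n = \prod_{l \in L_X^{(\leq n)}}^{<}(1-l).
\]
The finite-product inversion identity immediately yields $U_n V_n = 1$. Next I would show that $V_n$ is Cauchy for the ultrametric $d_\omega$ on $\RM{\laangle}X^*{\raangle}$: for any word $w$ with $|w| \leq n$ and any $M \geq n$, a contribution to the coefficient of $w$ in $V_M$ can only come from choosing the $-l$ factor for Lyndon words $l$ of length $\leq |w| \leq n$, so this coefficient is already determined at stage $n$. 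Hence $\omega(V_M, V_n) \geq n+1$, and completeness of $\RM{\laangle}X^*{\raangle}$ gives a limit $V = \prod_{l \in L_X}^{<}(1-l)$. Meanwhile, the derivation of (\ref{InveseOfAmitsurIdentity}) via (\ref{SumOfWords1}) and (\ref{SumOfWords2}) shows $U_n \to \{1-(x_1+\cdots+x_N)\}^{-1}$.

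Finally, I would pass to the limit in $U_n V_n = 1$ using continuity of multiplication in the ultrametric topology to obtain $\{1-(x_1+\cdots+x_N)\}^{-1} \cdot V = 1$, and therefore $V = 1-(x_1+\cdots+x_N)$, which is (\ref{AmitsurIdentity}).

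The main delicate point (rather than a genuine obstacle) is the order bookkeeping: in passing from $V_n$ to $V_{n+1}$, new factors $(1-l)$ for $l$ of length $n+1$ may be inserted at interior positions of the increasing product and not merely appended at one end, so one cannot write $V_{n+1}$ as $V_n$ times a single tail factor. The coefficient-stabilization argument above sidesteps this entirely by working term-by-term on short words rather than tracking the product's right-hand tail.
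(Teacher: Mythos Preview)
Your argument is correct and follows essentially the same approach as the paper: both truncate to Lyndon words of length at most $n$, observe that the resulting finite product $U_n V_n$ telescopes to $1$, and then pass to the limit. The only cosmetic difference is that the paper argues coefficient-by-coefficient (noting that the degree-$\leq d$ part of the full product already agrees with the degree-$\leq d$ part of the finite truncation), whereas you phrase the same stabilization via Cauchy sequences and continuity of multiplication in the $d_\omega$-topology.
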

\begin{proof} In order to show that 
\begin{equation}\label{ProdEquals1}
\Big{\{}\prod_{l{\in}L_X}^{>}(1-l)^{-1}\Big{\}}
\Big{\{}\prod_{l{\in}L_X}^{<}(1-l)\Big{\}}=1,
\end{equation}
we check that for an arbitrary nonnegative integer $r{\;\geq\;}0$  
the sum of words of length $r$ equals 
$1$ if $r=0$ and $0$ if $r>0$. Since 
$\prod_{l{\in}L_X}^{>}(1-l)^{-1}=
\prod_{l{\in}L_X}^{>}(1+l+l^2+{\cdots})$, 
the sum of words of length at most $d$ in the left hand side of 
(\ref{ProdEquals1}) is the same as that of the product: 
\[
\Big{\{}\prod_{\substack{l{\in}L_X\\|l|{\leq}d}}^{>}
(1+l+l^2+{\cdots})\Big{\}}
\Big{\{}\prod_{\substack{l{\in}L_X\\|l|{\leq}d}}^{<}(1-l)\Big{\}}.
\]
This is a finite product since $|X|<\infty$ and therefore 
is equal to $1$. Since $d$ is arbitrary, (\ref{ProdEquals1}) holds. 
\end{proof}

Let $[2m]=\{1,2,\cdots,2m\}$ and $[2m]{\times}[2m]$ the 
cartesian product with the lexicographic order derived from 
the natural order on $[2m]$. 
We say that a word $w=(i_1,j_1)(i_2,j_2){\cdots}(i_d,j_d){\;\in\;}$ $([2m]{\times}[2m])^*$ is 
{\it connected} if $j_r=i_{r+1}$ for $r=1,2,{\cdots},d-1$. For a connected word 
$w=(i_1,i_2)(i_2,i_3){\cdots}(i_d,j_d)\;\in\;([2m]{\times}[2m])^*$, we set 
$o(w)=i_1$ and $t(w)=j_d$. 
Consider the finite nonempty set $X=\{x(r,s)\;|\;(r,s){\;\in\;}[2m]{\times}[2m]\}$ 
equipped with the total order derived from $[2m]{\times}[2m]$. 
For each matrix ${\bf A}=(a_{rs}){\;\in\;}\Mat(2m,\HM)$, we define $\rho^{\bf A}$ to be the 
$\RM$-algebra homomorphism from the monoid ring $\RM[X^*]$ 
to $\Mat(2m,\HM)$ defined by $\rho^{\bf A}(x(r,s))=a_{rs}{\bf E}_{rs}$, where ${\bf E}_{rs}$ denotes the 
$(r,s)$-matrix unit. Let ${\bf A}(r,s)=a_{rs}{\bf E}_{rs}$. 
For $a_{rs}\;(1{\;\leq\;}r,s{\;\leq\;}2m)$ with $|a_{rs}|$ sufficiently small, 
we can apply $\rho^{\bf A}$ to (\ref{AmitsurIdentity}) so that 
\begin{equation}\label{AmitsurForMatrix}
{\bf I}_{2m}-\{{\bf A}(1,1)+{\bf A}(1,2)+{\cdots}+{\bf A}(2m,2m)\}
=\prod_{l{\in}L_{[2m]{\times}[2m]}}^{<}({\bf I}_{2m}-{\bf A}_l), 
\end{equation}
where ${\bf A}_l={\bf A}(i_1,j_1){\bf A}(i_2,j_2){\cdots}{\bf A}(i_d,j_d)$ for each 
$l=(i_1,j_1)(i_2,j_2){\cdots}(i_d,j_d)$ ${\;\in\;}L_{[2m]{\times}[2m]}$. 
Indeed, the following holds: 

\begin{proposition}\label{ConvergenceOfInfiniteProduct}
For $a_{rs}\;(1{\;\leq\;}r,s{\;\leq\;}2m)$ with $|a_{rs}|$ sufficiently small, 
all entries in the right hand side of {\rm (\ref{AmitsurForMatrix})} converge absolutely with 
respect to the norm of quaternions. 
Particularly all entries in the right hand side of {\rm (\ref{AmitsurForMatrix})} converge. 
\end{proposition}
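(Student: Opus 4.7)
The plan is to reduce convergence of the matrix infinite product in (\ref{AmitsurForMatrix}) to a geometric estimate on the norms $\|{\bf A}_l\|$. First I would observe that, since each factor ${\bf A}(r,s)=a_{rs}{\bf E}_{rs}$ has exactly one nonzero entry and the real matrix units satisfy ${\bf E}_{ij}{\bf E}_{kl}=\delta_{jk}{\bf E}_{il}$, and since any scalar quaternion commutes through a matrix with real entries, the product ${\bf A}_l={\bf A}(i_1,j_1)\cdots{\bf A}(i_d,j_d)$ simplifies to $a_{i_1 j_1}a_{i_2 j_2}\cdots a_{i_d j_d}\,{\bf E}_{i_1,j_1}{\bf E}_{i_2,j_2}\cdots{\bf E}_{i_d,j_d}$. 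This vanishes unless the Lyndon word $l$ is connected; in the connected case $l=(i_1,i_2)(i_2,i_3)\cdots(i_d,i_{d+1})$ it equals $a_{i_1 i_2}\cdots a_{i_d i_{d+1}}\,{\bf E}_{i_1,i_{d+1}}$, a rank-one matrix with a single nonzero entry of quaternion norm $\prod_{r=1}^d |a_{i_r i_{r+1}}|$.

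Next, setting $C=\max_{r,s}|a_{rs}|$ and using multiplicativity of the quaternion norm, every entry of ${\bf A}_l$ has magnitude at most $C^d$. Since a connected word of length $d$ is parametrized by $(i_1,\ldots,i_{d+1})\in[2m]^{d+1}$, the number of connected Lyndon words of length $d$ is at most $(2m)^{d+1}$. Summing over $d$,
\[
\sum_{l\in L_{[2m]\times[2m]}}\max_{p,q}|({\bf A}_l)_{pq}|\;\leq\;\sum_{d\geq 1}(2m)^{d+1}C^d,
\]
a geometric series that converges whenever $C<1/(2m)$. The same estimate also bounds any submultiplicative matrix norm $\|{\bf A}_l\|$ up to a constant depending only on $m$, so $\sum_l\|{\bf A}_l\|<\infty$ for $|a_{rs}|$ sufficiently small.

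Finally, I would invoke the standard convergence criterion for infinite products in the unital Banach algebra $\Mat(2m,\HM)$: absolute summability of $\|{\bf A}_l\|$ implies convergence, in any fixed order, of $\prod_l({\bf I}_{2m}-{\bf A}_l)$. Letting $P_N$ denote the ordered partial product over $\{l\in L_{[2m]\times[2m]}:|l|\leq N\}$, one has the telescoping identity $P_{N+k}-P_N=P_N\bigl(\prod_{N<|l|\leq N+k}^<({\bf I}_{2m}-{\bf A}_l)-{\bf I}_{2m}\bigr)$, and the uniform bound $\|P_N\|\leq\prod_l(1+\|{\bf A}_l\|)<\infty$ combined with a tail estimate shows that $\{P_N\}$ is Cauchy; the entry-wise bound above then upgrades this to absolute convergence of each entry of the limit.

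The main subtlety I anticipate is the noncommutativity of $\Mat(2m,\HM)$, which in principle complicates both the combinatorial expansion of $P_N$ and the manipulation of quaternionic scalars. This obstacle is neutralized by the observation already made in the first step: once the nonvanishing of ${\bf A}_l$ is encoded in connectedness, every scalar factor freely commutes through the real matrix units, so each term of the combinatorial expansion of $P_N$ is majorized, entry by entry, by the corresponding commutative expansion, and the scalar Banach-algebra argument transfers without essential modification.
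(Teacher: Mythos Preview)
Your argument is correct and rests on the same key observation as the paper---that a quaternion scalar commutes through the real matrix units ${\bf E}_{rs}$, so ${\bf A}_l$ collapses to a single quaternion times a matrix unit (or vanishes if $l$ is not connected). Where you diverge is in how you conclude convergence. The paper proceeds more directly: it fully expands the product as
\[
\prod_{l}^{<}({\bf I}_{2m}-{\bf A}_l)={\bf I}_{2m}+\sum_{h=1}^{\infty}\sum_{\substack{w=l_1\cdots l_{n(w)}\\ l_1<\cdots<l_{n(w)},\;|w|=h}}(-1)^{n(w)}a_w{\bf E}_w,
\]
reads off the $(r,s)$-entry as a series over connected words $w$ with strictly increasing Lyndon factorization, and bounds it crudely by $\sum_{h\geq 1}(2m)^{2h}/(8m^2)^h=1$ under the hypothesis $|a_{rs}|<1/(8m^2)$. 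You instead bound $\sum_l\|{\bf A}_l\|$ over Lyndon words only, then invoke the Banach-algebra criterion $\sum_l\|{\bf A}_l\|<\infty\Rightarrow\prod_l({\bf I}_{2m}-{\bf A}_l)$ converges, and finally (in your last paragraph) return to essentially the same combinatorial expansion, majorized by $\prod_l(1+\|{\bf A}_l\|)<\infty$, to obtain entry-wise absolute convergence. Your route yields a sharper threshold ($|a_{rs}|<1/(2m)$ versus $1/(8m^2)$) and packages the argument more modularly; the paper's route is more elementary and self-contained, avoiding any appeal to infinite-product theory in Banach algebras. Note, incidentally, that your Banach-algebra Cauchy step is not strictly needed for the stated conclusion: once you have $\sum_l\|{\bf A}_l\|<\infty$, the combinatorial expansion bound you describe at the end already delivers absolute convergence of each entry directly.
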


\begin{proof}
We put $a_l= a_{i_1j_1}a_{i_2j_2}{\cdots}a_{i_dj_d}$ and 
${\bf E}_l= {\bf E}_{i_1j_1}{\bf E}_{i_2j_2}{\cdots}{\bf E}_{i_dj_d}$ for a Lyndon word 
$l=(i_1,j_1)(i_2,j_2){\cdots}(i_d,j_d){\;\in\;}L_{[2m]{\times}[2m]}$. 
Since ${\bf A}_l={\bf A}_{(i_1,j_1){\cdots}(i_d,j_d)}=
a_{i_1i_2}a_{i_2i_3}{\cdots}a_{i_{d-1}i_d}a_{i_dj_d}{\bf E}_l$, 
it follows that 
\begin{equation}\label{EqnInfinteSeriesOfMatirx}
\begin{split}
\prod_{l{\in}L_{[2m]{\times}[2m]}}^{<}({\bf I}_{2m}-{\bf A}_l)
&=\prod_{l{\in}L_{[2m]{\times}[2m]}}^{<}({\bf I}_{2m}-a_l{\bf E}_l)\\
&={\bf I}_{2m}+\sum_{h=1}^{\infty}
\sum_{\substack{w=l_1{\cdots}l_{n(w)},\,l_1<{\cdots}<l_{n(w)}\\
l_1,{\cdots},l_{n(w)}{\in}L_{[2m]{\times}[2m]},\,|w|=h}}
(-1)^{n(w)}a_w{\bf E}_w,
\end{split}
\end{equation}
where 
$a_w=a_{l_1}a_{l_2}{\cdots}a_{l_{n(w)}},\,
{\bf E}_w={\bf E}_{l_1}{\bf E}_{l_2}{\cdots}{\bf E}_{l_{n(w)}}$ and 
$n(w)$ is the number of Lyndon words that are multiplied in $w$. 
If $|w|=h$ then $a_w$ can be expressed by $a_w=a_{i_1j_1}{\cdots}a_{i_hj_h}$. 
Then we notice that $|a_w|=|a_{i_1j_1}|{\cdots}|a_{i_hj_h}|$. 
We can easily see that ${\bf E}_w={\bf E}_{o(w)t(w)}$ if $w$ is connected, and 
${\bf E}_w={\bf O}_{2m}$ otherwise. 
Therefore $(r,s)$-entry of (\ref{EqnInfinteSeriesOfMatirx}) is expressed as follows: 
\begin{equation}\label{rsEntryOfInfiniteProduct}
\Big{(}\prod_{l{\in}L_{[2m]{\times}[2m]}}^{<}({\bf I}_{2m}-{\bf A}_l)\Big{)}_{rs}
=\delta_{rs}+\sum_{h=1}^{\infty}\sum_{\substack{w=l_1{\cdots}l_{n(w)},\,l_1<{\cdots}<l_{n(w)}\\
l_1,{\cdots},l_{n(w)}{\in}L_{[2m]{\times}[2m]},\,|w|=h\\
w\text{ is connected},\,o(w)=r,t(w)=s}}(-1)^{n(w)}a_w.
\end{equation}
In (\ref{rsEntryOfInfiniteProduct}), 
the number of $w$ of length $h$ is at most the number of words in $([2m]{\times}[2m])^*$ 
of length $h$ which equals $(2m)^{2h}$. 
Hence if $|a_{rs}|<1/(8m^2)$ for all $r,s=1,{\cdots},2m$, then 
\begin{equation*}\begin{split}
&|\delta_{rs}|+\sum_{h=1}^{\infty}\Big{|}\sum_{\substack{w=l_1{\cdots}l_{n(w)},\,l_1<{\cdots}<l_{n(w)}\\
l_1,{\cdots},l_{n(w)}{\in}L_{[2m]{\times}[2m]},\,|w|=h\\
w\text{ is connected},\,o(w)=r,t(w)=s}}(-1)^{n(w)}a_w\Big{|}\\
&{\;\leq}|\delta_{rs}|+\sum_{h=1}^{\infty}\sum_{\substack{w=l_1{\cdots}l_{n(w)},\,l_1<{\cdots}<l_{n(w)}\\
l_1,{\cdots},l_{n(w)}{\in}L_{[2m]{\times}[2m]},\,|w|=h\\
w\text{ is connected},\,o(w)=r,t(w)=s}}|a_w|\\
&<1+\sum_{h=1}^{\infty}\dfrac{(2m)^{2h}}{(8m^2)^h}=1+\sum_{h=1}^{\infty}\dfrac{1}{2^h}
=2
\end{split}\end{equation*} 
Thus the right hand side of (\ref{rsEntryOfInfiniteProduct}) 
converges absolutely with respect to the norm of quaternions. 
\end{proof}

Since ${\bf A}(1,1)+{\bf A}(1,2)+{\cdots}+{\bf A}(2m,2m)={\bf A}$, 
it follows from (\ref{AmitsurForMatrix}) that 

\begin{proposition}\label{AmitsurForMatrixThm}
Let ${\bf A}=(a_{rs})$ be a $2m{\times}2m$ quaternionic matrix with $|a_{rs}|$ 
sufficiently small. Then 
\begin{equation}\label{EqnAmitsurIdentity}
{\bf I}_{2m}-{\bf A}=\prod_{\substack{(i_1,j_1){\cdots}(i_d,j_d){\in}L_{[2m]{\times}[2m]}\\
j_r=i_{r+1}\;(r=1,{\cdots},d-1)}}^{<}({\bf I}_{2m}-a_{i_1i_2}a_{i_2i_3}{\cdots}
a_{i_{d-1}i_d}a_{i_dj_d}{\bf E}_{i_1j_d}),
\end{equation}
in $\Mat(2m,\HM)$. 
\end{proposition}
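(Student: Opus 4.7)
The plan is to obtain (\ref{EqnAmitsurIdentity}) by applying the $\RM$-algebra homomorphism $\rho^{\bf A}$ to both sides of Amitsur's identity (\ref{AmitsurIdentity}) and then collapsing every factor on the right-hand side that corresponds to a Lyndon word which is not connected. Since the monoid ring $\RM[X^*]$ is the free $\RM$-algebra on $X$ and $\RM$ is central in both $\HM$ and $\Mat(2m,\HM)$, the map $\rho^{\bf A}$ is well-defined and the image of (\ref{AmitsurIdentity}) is a valid identity in $\Mat(2m,\HM)$, which is exactly (\ref{AmitsurForMatrix}).

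First I would handle the left-hand side: applying $\rho^{\bf A}$ to $1-(x(1,1)+\cdots+x(2m,2m))$ gives ${\bf I}_{2m}-\sum_{r,s}a_{rs}{\bf E}_{rs}={\bf I}_{2m}-{\bf A}$, as noted in the paper. The heart of the argument is the right-hand side. For a Lyndon word $l=(i_1,j_1)(i_2,j_2)\cdots(i_d,j_d)$, I compute
\[
{\bf A}_l=(a_{i_1j_1}{\bf E}_{i_1j_1})(a_{i_2j_2}{\bf E}_{i_2j_2})\cdots(a_{i_dj_d}{\bf E}_{i_dj_d}).
\]
Because each ${\bf E}_{rs}$ has real entries, any quaternion commutes with any matrix unit; I can therefore pull every $a_{i_rj_r}$ to the left while preserving their given order (which is essential since $\HM$ is noncommutative) and obtain
\[
{\bf A}_l=a_{i_1j_1}a_{i_2j_2}\cdots a_{i_dj_d}\cdot{\bf E}_{i_1j_1}{\bf E}_{i_2j_2}\cdots{\bf E}_{i_dj_d}.
\]
Using the standard rule ${\bf E}_{ab}{\bf E}_{cd}=\delta_{bc}{\bf E}_{ad}$ repeatedly, the product of matrix units is zero unless $j_r=i_{r+1}$ for every $r=1,\dots,d-1$, i.e.\ unless $l$ is connected; when $l$ is connected it equals ${\bf E}_{i_1j_d}$ and the scalar product collapses to $a_{i_1i_2}a_{i_2i_3}\cdots a_{i_{d-1}i_d}a_{i_dj_d}$ after substituting $j_r=i_{r+1}$. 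Thus each disconnected Lyndon word contributes the trivial factor ${\bf I}_{2m}-{\bf A}_l={\bf I}_{2m}$, and removing these trivial factors from the product in (\ref{AmitsurForMatrix}) yields exactly the right-hand side of (\ref{EqnAmitsurIdentity}).

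The only delicate point is the meaning of the possibly infinite product. The hard part is not the algebraic identity itself but justifying that both (\ref{AmitsurForMatrix}) and (\ref{EqnAmitsurIdentity}) are meaningful in $\Mat(2m,\HM)$; this is precisely what Proposition \ref{ConvergenceOfInfiniteProduct} provides under the assumption that $|a_{rs}|$ is sufficiently small, since suppressing the disconnected factors does not change the partial products. Hence (\ref{EqnAmitsurIdentity}) follows.
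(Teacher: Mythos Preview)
Your argument is correct and is essentially the paper's own proof made fully explicit: the paper obtains (\ref{AmitsurForMatrix}) by applying $\rho^{\bf A}$ to (\ref{AmitsurIdentity}), invokes Proposition~\ref{ConvergenceOfInfiniteProduct} for convergence, and then passes to (\ref{EqnAmitsurIdentity}) with the one-line remark that ${\bf A}(1,1)+\cdots+{\bf A}(2m,2m)={\bf A}$, implicitly using (from the proof of Proposition~\ref{ConvergenceOfInfiniteProduct}) that ${\bf E}_l={\bf O}_{2m}$ for disconnected $l$. Your write-up spells out exactly these points---the commutation of quaternionic scalars with real matrix units, the vanishing of ${\bf A}_l$ for disconnected $l$, and the resulting removal of trivial factors---so there is nothing to add.
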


Now we take Study determinants of both sides in 
(\ref{EqnAmitsurIdentity}). 
\begin{equation}\label{EqnSdetIdentity}
\begin{split}
&\Sdet({\bf I}_{2m}-{\bf A})\\
&=\Sdet\Big{(}\prod_{\substack{(i_1,j_1){\cdots}(i_d,j_d){\in}L_{[2m]{\times}[2m]}\\
j_r=i_{r+1}\;(r=1,{\cdots},d-1)}}^{<}({\bf I}_{2m}-a_{i_1i_2}a_{i_2i_3}{\cdots}
a_{i_{d-1}i_d}a_{i_dj_d}{\bf E}_{i_1j_d})\Big{)}\\
&=\prod_{\substack{(i_1,j_1){\cdots}(i_d,j_d){\in}L_{[2m]{\times}[2m]}\\
j_r=i_{r+1}\;(r=1,{\cdots},d-1)}}\Sdet({\bf I}_{2m}-a_{i_1i_2}a_{i_2i_3}{\cdots}
a_{i_{d-1}i_d}a_{i_dj_d}{\bf E}_{i_1j_d}).
\end{split}
\end{equation}
We notice that the last formula does not depend on the order in which 
factors are multiplied since Study determinants take values in $\RM$. 
It follows from Proposition \ref{SdetProperties} (vi) that if $j_d=i_1$, then 
\begin{equation*}
\Sdet({\bf I}_{2m}-a_{i_1i_2}a_{i_2i_3}{\cdots}a_{i_di_1}
{\bf E}_{i_1i_1})=|1-a_{i_1i_2}a_{i_2i_3}{\cdots}a_{i_di_1}|^2,
\end{equation*}
and otherwise,  
\[
\Sdet({\bf I}_{2m}-a_{i_1i_2}a_{i_2i_3}{\cdots}a_{i_dj_d}
{\bf E}_{i_1j_d})=1.
\]
Let ${\bf W}=({\bf W}_{uv})_{u,v{\in}V(G)}$ be an arbitrary quaternionic 
weighted matrix of $G$ and $t$ a quaternion with $|t|$ sufficiently small so that 
$|t\tilde{w}(e,f)|<1/(8m^2)$ for all $e,f{\;\in\;}D(G)$, 
where $\tilde{w}(e,f)$ is as in (\ref{EntriesOf2ndEdgeMatrix}). 
Putting ${\bf A}=t({\bf B}_w-{\bf J}_0)$ and indexing rows and columns with 
$e_1,e_2,{\cdots},e_{2m}{\;\in\;}D(G)$, then we have 
$a_{rs}=a_{e_re_s}=t\tilde{w}(e_r,e_s)$. 
Therefore, (\ref{EqnSdetIdentity}) yields 
\begin{equation*}
\begin{split}
&\Sdet({\bf I}_{2m}-t({\bf B}_w-{\bf J}_0))\\
&=\prod_{(i_1,i_2){\cdots}(i_d,i_1){\in}L_{[2m]{\times}[2m]}}
\Sdet({\bf I}_{2m}-t\tilde{w}(e_{i_1},e_{i_2})t\tilde{w}(e_{i_2},e_{i_2}){\cdots}
t\tilde{w}(e_{i_d},e_{i_1}){\bf E}_{i_1i_1})\\
&=\prod_{(i_1,i_2){\cdots}(i_d,i_1){\in}L_{[2m]{\times}[2m]}}
|1-t\tilde{w}(e_{i_1},e_{i_2})t\tilde{w}(e_{i_2},e_{i_2}){\cdots}
t\tilde{w}(e_{i_d},e_{i_1})|^2
\end{split}
\end{equation*}
Each Lyndon word $(i_1,i_2){\cdots}(i_d,i_1)$ in $L_{[2m]{\times}[2m]}$ 
corresponds to a Lyndon word 
$i_1i_2{\cdots}i_d$ in $L_{[2m]}$ bijectively. Hence we obtain 

\begin{theorem}\label{ThmEulerProduct}
Let $t$ be a quaternion with $|t|$ sufficiently small. Then
\begin{equation*}
\begin{split}
{\bf Z}^{\HM}_1(G,w,t)
=\prod_{i_1i_2{\cdots}i_d{\in}L_{[2m]}}
|1-t\tilde{w}(e_{i_1},e_{i_2})t\tilde{w}(e_{i_2},e_{i_2}){\cdots}
t\tilde{w}(e_{i_d},e_{i_1})|^{-2}.
\end{split}
\end{equation*}
\end{theorem}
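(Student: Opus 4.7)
The plan is to take $\Sdet$ of both sides of the noncommutative Amitsur-type matrix identity of Proposition \ref{AmitsurForMatrixThm} with $\mathbf{A} = t(\mathbf{B}_w - \mathbf{J}_0)$, evaluate each resulting factor using Proposition \ref{SdetProperties}(vi), and then recognize that the nontrivial factors are indexed by a set naturally bijective with $L_{[2m]}$. Since by definition $\mathbf{Z}^{\HM}_1(G,w,t) = \Sdet(\mathbf{I}_{2m} - t(\mathbf{B}_w-\mathbf{J}_0))^{-1}$, once the product expansion for the $\Sdet$ is in hand, inverting it delivers the Euler product.

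Concretely, I would first substitute $a_{rs} = t\tilde{w}(e_r, e_s)$ for the entries of $t(\mathbf{B}_w - \mathbf{J}_0)$ into Proposition \ref{AmitsurForMatrixThm}; the hypothesis on $|a_{rs}|$ translates to $|t|$ being small enough, which is already assumed. Then I would apply multiplicativity of $\Sdet$ (Proposition \ref{SdetProperties}(iii)) to both sides. Each matrix $\mathbf{I}_{2m} - c\mathbf{E}_{i_1 j_d}$ on the right is either unitriangular (when $j_d \neq i_1$), hence has $\Sdet$ equal to $1$ by Proposition \ref{SdetProperties}(vi), or it is diagonal with a single modified entry $1-c$ (when $j_d = i_1$), in which case the same proposition gives $\Sdet = |1-c|^2$. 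Thus the infinite product collapses to one over the ``closed'' connected Lyndon words in $L_{[2m]\times [2m]}$, i.e.\ those of the form $(i_1,i_2)(i_2,i_3)\cdots(i_d,i_1)$.

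Next I would establish the bijection $L_{[2m]} \longrightarrow \{\text{closed connected Lyndon words in } L_{[2m]\times[2m]}\}$ sending $i_1 i_2 \cdots i_d$ to $(i_1,i_2)(i_2,i_3)\cdots(i_d,i_1)$. Because the lexicographic order on $[2m]\times[2m]$ is derived from the natural order on $[2m]$, cyclic rotations of $i_1\cdots i_d$ correspond exactly to cyclic rotations of the image word, and both minimality-under-rotation and primitivity transfer in both directions. Reindexing the product via this bijection and taking the reciprocal produces precisely the factors $|1 - t\tilde{w}(e_{i_1},e_{i_2})\cdots t\tilde{w}(e_{i_d},e_{i_1})|^{-2}$ claimed in the theorem.

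The main obstacle is the rigorous handling of the infinite product on the matrix side and the passage of $\Sdet$ through it. Proposition \ref{ConvergenceOfInfiniteProduct} guarantees absolute convergence of the entries, and since $\Sdet = \det \circ \psi$ with $\psi$ continuous and $\det$ polynomial, $\Sdet$ is continuous in the matrix entries, so it commutes with the limit of convergent partial products; the fact that $\Sdet$ always takes nonnegative real values (Proposition \ref{SdetProperties}(i)) further removes any concern about the order of multiplication once $\Sdet$ has been applied. With these points verified, the only remaining checks are the bijective correspondence between Lyndon words of the two types and the bookkeeping of substituting $a_{rs} = t\tilde{w}(e_r,e_s)$, both of which are routine.
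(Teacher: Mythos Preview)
Your proposal is correct and follows essentially the same route as the paper: apply $\Sdet$ to both sides of the matrix Amitsur identity (Proposition \ref{AmitsurForMatrixThm}) with $\mathbf{A}=t(\mathbf{B}_w-\mathbf{J}_0)$, use Proposition \ref{SdetProperties}(vi) to kill the off-diagonal factors and evaluate the diagonal ones as $|1-c|^2$, then reindex via the bijection between closed connected Lyndon words in $L_{[2m]\times[2m]}$ and Lyndon words in $L_{[2m]}$. Your treatment of the passage of $\Sdet$ through the infinite product (via continuity of $\det\circ\psi$) and of the order-independence after taking $\Sdet$ is in fact a bit more explicit than the paper's, which simply records equation (\ref{EqnSdetIdentity}) and the remark that real values make the order irrelevant.
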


Since real numbers are central in $\HM$, it follows that  

\begin{corollary}\label{CorEulerProduct}
Let $t$ be a real number with $|t|$ sufficiently small. Then
\begin{equation*}
\begin{split}
{\bf Z}^{\HM}_1(G,w,t)
=\prod_{i_1i_2{\cdots}i_d{\in}L_{[2m]}}
|1-\tilde{w}(e_{i_1},e_{i_2})\tilde{w}(e_{i_2},e_{i_2}){\cdots}
\tilde{w}(e_{i_d},e_{i_1})t^d|^{-2}.
\end{split}
\end{equation*}
\end{corollary}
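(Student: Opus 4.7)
The plan is to derive Corollary \ref{CorEulerProduct} as an immediate consequence of Theorem \ref{ThmEulerProduct}, leveraging the fact that the real numbers lie in the center of $\HM$. First I would observe that, since $t \in \RM$ commutes with every quaternion, for each Lyndon word $i_1 i_2 \cdots i_d \in L_{[2m]}$ the interleaved product
\[
t\tilde{w}(e_{i_1},e_{i_2})\,t\tilde{w}(e_{i_2},e_{i_3})\cdots t\tilde{w}(e_{i_d},e_{i_1})
\]
appearing in Theorem \ref{ThmEulerProduct} can be rearranged by sliding every factor of $t$ to one side, producing
\[
t^d\,\tilde{w}(e_{i_1},e_{i_2})\tilde{w}(e_{i_2},e_{i_3})\cdots\tilde{w}(e_{i_d},e_{i_1}),
\]
which by the same centrality argument equals $\tilde{w}(e_{i_1},e_{i_2})\tilde{w}(e_{i_2},e_{i_3})\cdots\tilde{w}(e_{i_d},e_{i_1})\,t^d$.

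Substituting this rearrangement into every factor of the Euler product in Theorem \ref{ThmEulerProduct} turns $|1 - t\tilde{w}(e_{i_1},e_{i_2})\cdots t\tilde{w}(e_{i_d},e_{i_1})|^{-2}$ into $|1 - \tilde{w}(e_{i_1},e_{i_2})\cdots\tilde{w}(e_{i_d},e_{i_1})t^d|^{-2}$ for each Lyndon word, yielding the claimed formula after taking the product over $L_{[2m]}$. The only point to verify is that the smallness condition on $|t|$ needed for Theorem \ref{ThmEulerProduct} (and for the convergence in Proposition \ref{ConvergenceOfInfiniteProduct}) still holds when specializing $t$ to a real number, which is immediate because the modulus on $\RM$ coincides with the quaternionic norm. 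There is no genuine obstacle here; the whole content is the centrality of $\RM$ in $\HM$, and the corollary simply records the resulting cleaner dependence on the cycle length $d$.
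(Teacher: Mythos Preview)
Your proposal is correct and matches the paper's own approach exactly: the paper simply notes that real numbers are central in $\HM$ and states the corollary as an immediate consequence of Theorem~\ref{ThmEulerProduct}, which is precisely what you do.
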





\section*{Acknowledgments}
The first author is partially supported by the Grant-in-Aid for Scientific Research 
(Challenging Exploratory Research) of Japan Society for the Promotion of Science (Grant No. 15K13443).
The third author is partially supported by the Grant-in-Aid for Scientific Research 
(C) of Japan Society for the Promotion of Science (Grant No. 15K04985).

\end{document}